\makeatletter\@addtoreset {equation}{section}\makeatother
\newtheorem{theorem}{Theorem}
\newtheorem{lemma}{Lemma}[section]
\theoremstyle{remark}
\theoremstyle{definition}
\theoremstyle{corollary}
\begin{document}

\title[Peaked periodic waves in the Camassa--Holm equation]{\bf Growth of perturbations
to the peaked periodic waves \\ in the Camassa--Holm equation}

\author{Aigerim Madiyeva}
\address[A. Madiyeva]{Department of Mathematics and Statistics, McMaster University,
Hamilton, Ontario, Canada, L8S 4K1}
\email{madiyeva@mcmaster.ca}

\author{Dmitry E. Pelinovsky}
\address[D. Pelinovsky]{Department of Mathematics and Statistics, McMaster University,
Hamilton, Ontario, Canada, L8S 4K1}

\keywords{Peaked periodic waves, Camassa--Holm equation, characteristics, stability, instability}

\begin{abstract}
Peaked periodic waves in the Camassa--Holm equation are revisited. Linearized evolution
equations are derived for perturbations to the peaked periodic waves
and linearized instability
is proven both in $H^1$ and $W^{1,\infty}$ norms. Dynamics of perturbations in $H^1$
is related to the existence of two conserved quantities and is bounded in the full nonlinear
system due to these conserved quantities. On the other hand, 
perturbations
to the peaked periodic wave grow in $W^{1,\infty}$ norm
and may blow up in a finite time in the nonlinear evolution of the Camassa--Holm equation.
\end{abstract}

\date{\today}
\maketitle


\section{Introduction}

We address the Camassa-Holm (CH) equation given by
\begin{equation}
\label{CH}
u_t - u_{txx} + 3 u u_x = 2 u_x u_{xx} + u u_{xxx},
\end{equation}
in the setting of $\mathbb{T} := [-\pi,\pi]$ subject to the periodic boundary conditions at $\pm \pi$. Following the original derivation in \cite{CH,CHH}, 
the Camassa--Holm equation arises in hydrodynamical applications as a model for propagation of unidirectional shallow water waves \cite{CL,Johnson}. A generalized version of this equation also models propagation of nonlinear waves inside a cylindrical hyper-elastic rod with a small diameter \cite{Dai}. The CH equation can be interpreted geometrically in terms of geodesic flows on the diffeomorphism group \cite{Kouranbaeva,Mis1}. Generalizations of the CH equation with multi-peakons were constructed in \cite{Anco,Anco2}.

Let $\varphi \in H^1_{\rm per}(\mathbb{T})$ be the Green function satisfying
\begin{equation}
(1 - \partial_x^2) \varphi = 2 \delta_0, \quad x \in \mathbb{T},
\label{Green}
\end{equation}
with $\delta_0$ being Dirac delta distribution centered at $x = 0$.
The CH equation (\ref{CH}) can be rewritten in the convolution form
\begin{equation}
\label{CHconv}
u_t + u u_x + \frac{1}{2} \varphi' \ast \left( u^2 + \frac{1}{2} u_x^2 \right) = 0,
\end{equation}
where $(f \ast g)(x) := \int_{\mathbb{T}} f(x-y) g(y) dy$ denotes the convolution operator
and $\varphi'$ denotes piecewise continuous derivative of $\varphi$ in $x$.
The Green function $\varphi$ can be expressed explicitly in the form
\begin{equation}
\label{Green-explicit}
\varphi(x) = \frac{\cosh(\pi - |x|)}{\sinh(\pi)}, \quad x \in \mathbb{T},
\end{equation}
which shows that $\varphi \in H^1_{\rm per}(\mathbb{T}) \cap W^{1,\infty}(\mathbb{T})$
is a piecewise $C^1$ function with the maximum at
\begin{equation}
\label{maximum}
M := \varphi(0) = \coth(\pi)
\end{equation}
and the minima at
\begin{equation}
\label{minimum}
m := \varphi(\pm \pi) = {\rm csch}(\pi).
\end{equation}
The central peak of $\varphi$
is located at $x = 0$ with $\varphi'(0^{\pm}) = \mp 1$, from which $\varphi$ is
monotonically decreasing towards the turning points at $x = \pm \pi$ where $\varphi'(\pm \pi) = 0$. The graph of $\varphi$ versus $x$ is shown on Figure \ref{peaked-wave}.

\begin{figure}[htb!]
	\includegraphics[width=0.6\textwidth]{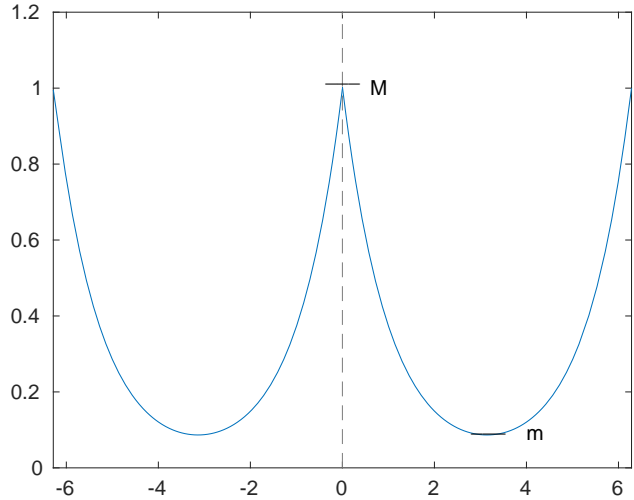}
	\caption{The graph of $\varphi$ on $[-2\pi,2\pi]$ with $M$ and $m$ given by (\ref{maximum}) and (\ref{minimum}).} \label{peaked-wave}
\end{figure} 

The Green function $\varphi$ determines also the travelling periodic wave solution $u(t,x) = \varphi(x-ct)$
to the CH equation (\ref{CHconv}) rewritten in the weak form, where $c$ is the wave speed. The wave speed is uniquely determined by $c = M$ which gives the unique solution 
\begin{equation}
\label{unique-wave}
u(t,x) = \varphi(x-Mt). 
\end{equation}
By using the elementary scaling transformation, this unique solution generates a family of travelling periodic waves of the form:
\begin{equation}
\label{peaked-traveling-wave}
u(t,x) = \gamma \varphi(x-\gamma M t), \quad \gamma \in \mathbb{R}.
\end{equation}
Since the travelling periodic waves (\ref{peaked-traveling-wave}) have a peaked profile $\varphi$,
we call them {\em the peaked periodic waves}. Stability of such peaked periodic waves in $H^1_{\rm per}(\mathbb{T})$ was studied by J. Lenells in \cite{Len1,Len2}
after similar results on stability of peaked solitary waves in $H^1(\mathbb{R})$ in \cite{CM,CS}.

The initial-value problem for the periodic CH equation (\ref{CHconv}) is locally well-posed in the space $H^3_{\rm per}(\mathbb{T})$ \cite{CE,CE2000}, 
$H^s_{\rm per}(\mathbb{T})$ with $s > \frac{3}{2}$ \cite{D,HM2002}, $C^1_{\rm per}(\mathbb{T})$ \cite{Mis2}, and $H^1_{\rm per}(\mathbb{T}) \cap {\rm Lip}(\mathbb{T})$ \cite{DKT}, where ${\rm Lip}(\mathbb{T})$ stands 
for Lipshitz continuous functions. Local well-posedness includes the existence, uniqueness, and continuous dependence of solutions from the initial data. 
Peaked periodic waves and their perturbations can be considered in the space of lower regularity $H^1_{\rm per}(\mathbb{T}) \cap {\rm Lip}(\mathbb{T})$.

{\em Cusped periodic waves} (given by bounded functions with unbounded derivatives on each side of the peaks) exist in $H^1_{\rm per}(\mathbb{T})$. These cusped waves were used in \cite{B,Him} to show that local solutions to the initial-value problem for the periodic CH equation (\ref{CHconv}) are not uniformly continuous 
with respect to the initial data. The same norm inflation occurs
in $H^s_{\rm per}(\mathbb{T})$ for every $s \leq \frac{3}{2}$ \cite{Molinet}. 
The initial-value problem for cusped periodic waves and their perturbations in $H^1_{\rm per}(\mathbb{T})$  is ill-posed due to the lack of continuous dependence on the initial data.

Two energy quantities are well-defined for solutions to the CH equation (\ref{CHconv}) in $H^1_{\rm per}(\mathbb{T})$:
\begin{equation}
\label{conserved-quantities}
E(u) = \int_{\mathbb{T}} (u^2 + u_x^2) dx, \quad F(u) = \int_{\mathbb{T}} u (u^2 + u_x^2) dx.
\end{equation}
These quantities are constant continuously in time before the first time instance, for which the $W^{1,\infty}$-norm of the solution blows up.

By considering perturbations to the peaked periodic wave in $H^1_{\rm per}(\mathbb{T})$ and
using conservation of the mean value as well as conservation of the two energy quantities (\ref{conserved-quantities}),
it was proven in \cite{Len1,Len2} that the peaked periodic wave $u(t,x) = \varphi(x-ct)$ is orbitally stable
in the following sense.

\begin{theorem}\cite{Len1,Len2}
\label{theorem-CS}
For every $\varepsilon > 0$, there is a $\nu > 0$ such that if $u \in C([0,T),H^1_{\rm per}(\mathbb{T}))$
is a solution to the CH equation (\ref{CH}) with the initial data $u_0$ satisfying
\begin{equation}
\label{H1-initial}
\| u_0 - \varphi \|_{H^1(\mathbb{T})} < \nu,
\end{equation}
then
\begin{equation}
\label{H1-final}
\| u(t,\cdot) - \varphi(\cdot - \xi(t)) \|_{H^1(\mathbb{T})} < \varepsilon, \quad t \in [0,T),
\end{equation}
where $\xi(t) \in \mathbb{T}$ is a point where the function $u(t,\cdot)$ attains its maximum on $\mathbb{T}$
and $T > 0$ is either finite or infinite.
\end{theorem}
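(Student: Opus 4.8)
The plan is to carry out the variational/Lyapunov strategy of Constantin--Strauss \cite{CS}, adapted to the periodic profile by Lenells \cite{Len1,Len2}, exploiting that the two energies $E$ and $F$ in (\ref{conserved-quantities}) are conserved along the flow and continuous on $H^1_{\rm per}(\mathbb{T})$: the first is the square of the $H^1$ norm, and the second is controlled through the embedding $H^1_{\rm per}(\mathbb{T}) \hookrightarrow C_{\rm per}(\mathbb{T})$. Since $u(t,\cdot)$ is continuous, its maximum is attained at some $\xi(t)$, and the entire problem reduces to showing that the maximal value $M_{u(t)} := u(t,\xi(t))$ stays close to $M = \varphi(0)$ whenever $E(u_0)$ and $F(u_0)$ are close to $E(\varphi)$ and $F(\varphi)$.

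First I would record the exact \emph{distance identity}. Shifting (\ref{Green}) gives $(1-\partial_x^2)\varphi(\cdot-\xi) = 2\delta_\xi$, so pairing against $u$ in $H^1_{\rm per}(\mathbb{T})$ yields $\int_{\mathbb{T}} \big( u\, \varphi(\cdot-\xi) + u_x\, \varphi'(\cdot-\xi)\big)\,dx = 2u(\xi)$. Taking $\xi = \xi(t)$ and expanding the square gives
\begin{equation}
\label{distance-identity}
\| u(t,\cdot) - \varphi(\cdot - \xi(t)) \|_{H^1(\mathbb{T})}^2 = E(u) + E(\varphi) - 4 M_{u(t)},
\end{equation}
while the same pairing of $\varphi$ with itself gives $E(\varphi) = 2M$. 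Thus (\ref{distance-identity}) reduces the $H^1$ distance to the two scalar deviations $E(u) - E(\varphi)$ and $M - M_{u(t)}$, the first of which is immediately small by conservation and continuity of $E$.

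The heart of the argument is a functional inequality controlling $M_{u(t)}$ through the second energy $F$. From (\ref{Green-explicit}) one checks on the smooth pieces of the profile the first-order relation $\varphi'^2 = \varphi^2 - m^2$, with $m = {\rm csch}(\pi)$ from (\ref{minimum}). Mimicking Constantin--Strauss, I would integrate the sign-definite quantities $(v \mp v_x)^2 (M_v - v) \ge 0$ and $(v \mp v_x)^2 (v - m_v) \ge 0$ over the arcs of $\mathbb{T}$ cut at the maximum and minimum points of $v$, and integrate the resulting cubic cross terms by parts, producing an inequality of the form $F(v) \le P(M_v, m_v; E(v))$ for all $v \in H^1_{\rm per}(\mathbb{T})$, with equality precisely when $v$ is a translate of $\varphi$. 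Unlike the line case, the periodic boundary terms do not vanish and force the minimum $m_v$ (the troughs where $\varphi' = 0$) to enter the inequality; obtaining the sharp constants and pinning down the equality case is the step I expect to be the main obstacle.

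To close, observe that for fixed $E(v)$ and fixed conserved mean $\int_{\mathbb{T}} v\,dx$ — the latter also constant in time for the CH flow — the right-hand side $P$ attains its maximum $F(\varphi)$ exactly at the peaked-wave values $(M_v, m_v) = (M, m)$, a nondegenerate constrained maximum reflecting that $\varphi$ extremizes $F$ subject to fixed $E$ and mass. By conservation, $E(u(t)) = E(u_0)$, $F(u(t)) = F(u_0)$, and $\int_{\mathbb{T}} u(t,\cdot)\,dx = \int_{\mathbb{T}} u_0\,dx$ stay within $O(\nu)$ of their values at $\varphi$ by continuity and (\ref{H1-initial}); feeding this into the functional inequality and using nondegeneracy traps $(M_{u(t)}, m_{u(t)})$ in an $O(\sqrt{\nu})$ neighborhood of $(M, m)$, uniformly in $t \in [0,T)$. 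In particular $|M - M_{u(t)}| \le C \sqrt{\nu}$, and substituting this together with $E(\varphi) = 2M$ into the distance identity (\ref{distance-identity}) gives
\begin{equation}
\| u(t,\cdot) - \varphi(\cdot - \xi(t)) \|_{H^1(\mathbb{T})}^2 = \big( E(u) - E(\varphi) \big) + 4 \big( M - M_{u(t)} \big) \le C' \sqrt{\nu} ,
\end{equation}
so that choosing $\nu$ with $C' \sqrt{\nu} < \varepsilon^2$ yields (\ref{H1-final}) and completes the proof.
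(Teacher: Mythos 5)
Your overall architecture is the right one, and indeed it is the one behind the result: the paper itself gives no proof of Theorem \ref{theorem-CS} --- it quotes it from \cite{Len1,Len2} --- and Lenells' proof is exactly the Constantin--Strauss scheme you describe. Your preliminary steps are correct and match the cited argument: pairing $u$ against $(1-\partial_x^2)\varphi(\cdot-\xi) = 2\delta_{\xi}$ gives $\int_{\mathbb{T}}\bigl(u\,\varphi(\cdot-\xi)+u_x\,\varphi'(\cdot-\xi)\bigr)dx = 2u(\xi)$, hence the distance identity $\| u - \varphi(\cdot-\xi(t))\|_{H^1}^2 = \bigl(E(u)-E(\varphi)\bigr) + 4\bigl(M-M_{u(t)}\bigr)$ with $E(\varphi)=2M$, and your use of the conserved mean alongside $E$ and $F$ is consistent with what the paper's introduction says Lenells uses.

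The genuine gap is that the heart of the theorem is never proved: the functional inequality $F(v)\le P(M_v,m_v;E(v))$ is only described in outline, and you explicitly defer "the sharp constants and pinning down the equality case." That deferred step \emph{is} the proof. In the periodic setting the boundary terms produced by integrating $(v\mp v_x)^2(M_v-v)$ and $(v\mp v_x)^2(v-m_v)$ over the arcs cut at the max and min do not vanish as on the line, and working them out to a closed polynomial $P$ in $(M_v,m_v)$ is precisely Lenells' main lemma; without it nothing downstream can be checked. The closing step is likewise asserted rather than proved: you claim that $P$ has a \emph{nondegenerate} constrained maximum at $(M_v,m_v)=(M,m)$ for fixed $E$ and mean, and that this "traps" $(M_{u(t)},m_{u(t)})$ within $O(\sqrt{\nu})$ uniformly in $t$; but the quantitative trapping requires analyzing the root structure of the resulting cubic inequality in $M_u$ near $M$ (this is how one actually gets $|M-M_{u(t)}|\le C\sqrt{\nu}$, and also why a continuity-in-$t$ argument is needed to rule out $M_{u(t)}$ jumping to a far-away admissible root). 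As it stands, your proposal is a faithful reconstruction of the strategy of \cite{Len1,Len2} with its two pivotal lemmas --- the sharp $F$-inequality and the nondegeneracy/trapping step --- left as acknowledged placeholders, so it does not yet constitute a proof.
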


The main purpose of this work is to clarify the meaning of the stability result of Theorem \ref{theorem-CS}. We will 
prove that the peaked periodic waves are strongly unstable with respect to perturbations in $H^1_{\rm per}(\mathbb{T}) \cap  W^{1,\infty}(\mathbb{T})$. 
In this regard, Theorem \ref{theorem-CS} just ensures that the $H^1$-norm of the solution does not
drift far from the $H^1$-norm of a translated peaked periodic wave,
when the $W^{1,\infty}$ norm grows and even blows up in a finite time.

The following theorem represents the main result of this work.

\begin{theorem}
\label{theorem-nonlinear}
For every $\delta > 0$, there exist $t_0 > 0$ and $u_0 \in H^1_{\rm per}(\mathbb{T}) \cap W^{1,\infty}(\mathbb{T})$
satisfying
\begin{equation}
\label{initial-bound-theorem}
\| u_0 - \varphi \|_{H^1(\mathbb{T})} + \| u_0' - \varphi' \|_{L^{\infty}(\mathbb{T})} < \delta,
\end{equation}
such that the local solution $u \in C([0,T),H^1_{\rm per}(\mathbb{T})\cap W^{1,\infty}(\mathbb{T}))$
to the CH equation (\ref{CHconv}) with the initial data $u_0$ and $T > t_0$ satisfies
\begin{equation}
\label{final-bound-theorem}
\| u_x(t_0,\cdot) - \varphi'(\cdot - \xi(t_0)) \|_{L^{\infty}(\mathbb{T})} \geq 1,
\end{equation}
where $\xi(t) \in \mathbb{T}$ is a point of peak of the function $u(t,\cdot)$ on $\mathbb{T}$.
Moreover, there exist $u_0$ such that the maximal existence time $T$ is finite.
\end{theorem}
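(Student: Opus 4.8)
The plan is to reduce the nonlinear evolution of the slope to a scalar Riccati equation along characteristics and to exploit the fact that the one-sided slope of $\varphi$ at its crest is an \emph{unstable} equilibrium of that equation, with the conserved quantities and the orbital stability of Theorem~\ref{theorem-CS} supplying uniform control of the remaining terms. First I would set up the method of characteristics for \eqref{CHconv}. Writing $P := \tfrac12 \varphi \ast (u^2 + \tfrac12 u_x^2)$, so that $u_t + u u_x + P_x = 0$, and differentiating in $x$ while using $(1-\partial_x^2)\varphi = 2\delta_0$ from \eqref{Green} (hence $P_{xx} = P - (u^2 + \tfrac12 u_x^2)$), one obtains
\[
u_{tx} + u u_{xx} + \tfrac12 u_x^2 - u^2 + P = 0.
\]
Introducing the characteristic $\dot X = u(t,X)$ and the material quantities $U(t) := u(t,X(t))$ and $V(t) := u_x(t,X(t))$, which satisfy classical ODEs at points away from a peak where $u$ is $C^1$, gives
\[
\dot U = -P_x(t,X), \qquad \dot V = a(t) - \tfrac12 V^2, \qquad a(t) := U(t)^2 - P(t,X(t)).
\]

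Next I would pin down the forcing $a$ at the crest of the unperturbed wave. For the exact solution $u = \varphi(\cdot - Mt)$ the peak characteristic $X(t)=Mt$ carries $U \equiv M$ and the right slope $V \equiv \varphi'(0^+) = -1$, both constant, so consistency with the slope equation forces $a \equiv \tfrac12$, i.e. $P(0) = M^2 - \tfrac12$. Near the crest the slope therefore obeys $\dot V = \tfrac12 - \tfrac12 V^2$ to leading order, with equilibria $V = \pm 1$: the left value $V=+1$ is stable, while the right value $V=-1$ is \emph{unstable}, since $\dot V < 0$ for $V<-1$ and $\dot V>0$ for $-1<V<1$. This is the mechanism of the theorem: a material point just to the right of the crest whose slope is pushed below $-1$ is driven to $-\infty$ in finite time, whereas one pushed into $(-1,1)$ drifts toward $+1$.

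I would then construct the data and close the estimate. Choose a material point $x_0$ just to the right of the peak and set $u_0 = \varphi + w_0$ with $w_0$ small in $H^1_{\rm per}(\mathbb{T}) \cap W^{1,\infty}(\mathbb{T})$, arranged so that $V(0) = u_0'(x_0)$ equals $-1-\eta$ (for blow-up) or $-1+\eta$ (for growth), with $\eta = \eta(\delta) \to 0$ small enough to keep \eqref{initial-bound-theorem} below $\delta$. The conserved quantities \eqref{conserved-quantities} give $E(u)=E(u_0)$, whence $\|u(t)\|_{L^\infty}$ is bounded and $0 \le P \le \tfrac12 M E(u)$ for all $t$; more sharply, Theorem~\ref{theorem-CS} keeps $u(t,\cdot)$ within $O(\delta)$ in $H^1$ of a translate $\varphi(\cdot-\xi(t))$, hence keeps $a(t)$ within some $\kappa=\kappa(\delta)\to 0$ of $\tfrac12$ along the tracked characteristic for all $t$ in the existence interval. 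Comparison of $\dot V = a(t)-\tfrac12 V^2$ with the autonomous flows $\dot V = (\tfrac12\pm\kappa) - \tfrac12 V^2$ then yields, provided $\kappa \ll \eta$: if $V(0) = -1-\eta < -\sqrt{1+2\kappa}$, the slope blows up to $-\infty$ at a finite time, giving finite maximal $T$; if instead $V(0)=-1+\eta \in (-\sqrt{1-2\kappa},0)$, the slope increases and reaches, say, $V(t_0)=\tfrac12$ at some finite $t_0$, whence $|u_x(t_0,X(t_0)) - \varphi'(X(t_0)-\xi(t_0))| \ge \tfrac32 - o(1) \ge 1$, which forces \eqref{final-bound-theorem}.

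The main obstacle I anticipate is the \emph{uniform-in-time} control of both the forcing $a(t)$ and the comparison point. Because escape from the unstable equilibrium is slow when $\eta$ is small, $t_0$ may be large, so one must guarantee that $a(t)$ stays $\kappa(\delta)$-close to $\tfrac12$ over the whole interval---this is precisely where Theorem~\ref{theorem-CS} is indispensable, preventing the profile from drifting and keeping the Riccati forcing locked near $\tfrac12$---and that the tracked characteristic $X(t)$ remains on the same (right) side of, and close to, the moving peak $\xi(t)$, so that $\varphi'(X(t_0)-\xi(t_0)) \approx -1$. A secondary technical point is justifying the characteristic ODEs at the available regularity $H^1_{\rm per}(\mathbb{T}) \cap {\rm Lip}(\mathbb{T})$, which requires keeping $X(t)$ away from the crest where $u_x$ jumps.
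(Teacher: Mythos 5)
Your mechanism is the correct one and is essentially the paper's: a Riccati equation for the slope whose right-side equilibrium $-1$ is unstable, Theorem \ref{theorem-CS} to lock the nonlocal forcing near $\tfrac12$ uniformly in time, a supersolution comparison for finite-time blow-up, and an exponential escape for \eqref{final-bound-theorem}; your computation $a\equiv\tfrac12$ at the crest (i.e.\ $P(0)=M^2-\tfrac12$) is correct. The genuine gap is at the point you dismiss as ``a secondary technical point''. Your material ODE $\dot V = a(t)-\tfrac12 V^2$ is only formal until one proves that the solution exists and \emph{remains} piecewise $C^1$ with a single peak; no local well-posedness in this class was previously available on $\mathbb{T}$, and the paper must establish it (Lemma \ref{lem-nonlinear-3}) by closing the characteristic system \eqref{char-X}, \eqref{char-V}--\eqref{char-U} for $(X,V,W,U)$ and verifying invertibility \eqref{invertibility}. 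This is a central component of the argument, not an afterthought: it is also what supplies the continuation alternative \eqref{blow-up-ode} that you need in order to convert $V\to-\infty$ into finiteness of the maximal time $T$. Worse, your proposed mitigation --- ``keeping $X(t)$ away from the crest where $u_x$ jumps'' --- is incompatible with your own construction: the forcing $a(t)=u^2-P$ equals $\tfrac12$ only \emph{at} the crest, and your tracked characteristic converges exponentially to the crest (in the moving frame, $\frac{d}{dt}(X-\xi)\approx -(X-\xi)$ while the slope stays near $-1$), so you cannot simultaneously stay away from the jump and keep the Riccati forcing locked near $\tfrac12$.

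The paper dissolves this tension by a device absent from your sketch: it first proves (Lemma \ref{lem-nonlinear-2}) that the peak itself travels along a characteristic, $\dot\xi=u(t,\xi)$, then passes to the moving frame with the perturbation $v=u-\varphi$ and evaluates the \emph{one-sided limits} of the slope exactly at the peak, $U^{\pm}(t)=\lim_{s\to 0^{\pm}}v_x$, which satisfy the exact equation \eqref{U-0} with no drifting characteristic to track: the constant part of the forcing appears exactly, and the remainder is bounded by Theorem \ref{theorem-CS} alone. To make your version rigorous you would additionally have to prove (i) that $X(t)$ stays strictly to the right of $\xi(t)$ --- which itself relies on Lemma \ref{lem-nonlinear-2}, since order-preservation of characteristics requires the peak to be one of them --- and (ii) a quantitative bound on $|X(t)-\xi(t)|$ over the whole escape time $t_0\sim\log(1/\eta)$, including the final phase of your growth branch where $V>0$ and the gap expands, before the step $\varphi'(X(t_0)-\xi(t_0))\approx -1$ is justified. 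None of these is supplied, and each is exactly where the paper's moving-frame formulation does the nontrivial work. (A minor difference, not a gap: the paper's growth branch drives the slope \emph{downward}, taking $v_0'(0^+)<0$ so that $U^+(t_1)\le -1$, which yields \eqref{final-bound-theorem} directly from $|U^+(t_0)|\ge 1$ at $x\to\xi(t_0)^+$, rather than your upward drift toward the stable equilibrium $+1$.)
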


The proof of Theorem \ref{theorem-nonlinear} is based on several recent developments.
A similar theorem for peaked solitary waves of the CH equation on an infinite line
was proven in \cite{NP}, to which the present work has many common points.
Analogous study was performed for peaked solitary waves in a different model (the Novikov equation)
in \cite{CP}, where the local well-posedness result of the initial-value problem
in $H^1(\mathbb{R}) \cap W^{1,\infty}(\mathbb{R})$ was not previously available.

In the context of the peaked periodic waves, linear instability of peaked periodic waves was obtained for
a different model (the reduced Ostrovsky equation) in \cite{GP2}.
More recently, spectral instability for perturbations in $L^2(\mathbb{T})$
was obtained in \cite{GP3} for the reduced Ostrovsky equation with either quadratic or cubic nonlinearities.
Neither local well-posedness nor the nonlinear instability was considered in the
framework of the reduced Ostrovsky equation in \cite{GP2,GP3}.

Let us explain the organization of the paper.

In Section 2, we define weak solutions to the CH equation (\ref{CHconv})
in $H^1_{\rm per}(\mathbb{T}) \cap W^{1,\infty}(\mathbb{T})$.
For solutions with a single peak at $\xi(t)$ on $\mathbb{T}$, we prove that the
single peak propagates with the local characteristic speed so that
\begin{equation}
\label{characteristic-speed}
\frac{d \xi}{dt} = u(t,\xi(t)).
\end{equation}
This allows us to define the precise form of the evolution equations for perturbations to the peaked periodic wave (see Appendix A). It follows from (\ref{characteristic-speed}) that the speed $c$ of the travelling peaked periodic wave $u(t,x) = \varphi(x-ct)$
is uniquely determined as $c = \varphi(0) = M$. The latter fact was missed in
the previous works \cite{Len1,Len2}, where $c = 1$ was suggested.

In Section 3, we study the linearized evolution equations for perturbations to the peaked periodic wave.
Similarly to \cite{NP} and \cite{CP}, we are able to simplify the linearized evolution equation
in a compact form, which requires no convolution integrals and which can be solved explicitly
by using the method of characteristics (see Appendix B). Linear instability of the peaked periodic wave
is proven both in $H^1$ and $W^{1,\infty}$. Compared to the previous work \cite{NP}, we add a new result, where we show 
that the linearized instability in $H^1$ is related to the conservation of the two energy quantities
(\ref{conserved-quantities}).

In Section 4, we consider nonlinear dynamics of peaked perturbations to the peaked periodic wave. 
Although the $H^1$ norm of the perturbation does not grow in the nonlinear evolution 
due to to the same two conserved quantities (\ref{conserved-quantities}) as is shown in \cite{Len1,Len2}, 
we prove that the $W^{1,\infty}$ norm of the perturbation can grow. Moreover, we prove that the 
$W^{1,\infty}$ norm of the perturbation can blow up in a finite time as follows:
\begin{equation}
\label{blow-up-criterion}
u_x(t,x) \to -\infty \quad \mbox{\rm at some} \;\; x \in \mathbb{T} \quad \mbox{\rm as} \;\; t \to T^-.
\end{equation}
The wave breakdown criterion (\ref{blow-up-criterion}) is natural for the inviscid Burgers equation
\begin{equation}
\label{Burgers}
u_t + u u_x = 0,
\end{equation}
which contributes to the local part of the CH equation (\ref{CHconv}). 
The precise blow-up rate was derived for the strong solutions in $H^3_{\rm per}(\mathbb{T})$ by using the method of characteristics \cite{CE2000}.
Compared to
the previous work \cite{NP}, we do not take for granted the existence of local solutions in
$H^1_{\rm per}(\mathbb{T}) \cap W^{1,\infty}(\mathbb{T})$ 
and prove local well-posedness for piecewise $C^1$ perturbations with a single peak on $\mathbb{T}$ by using the method of characteristics.
We also show that 
the wave breakdown (\ref{blow-up-criterion}) does occur as a result of the nonlinear instability
of Theorem \ref{theorem-nonlinear}.

Similar to the case of peaked solitary waves in the Camassa-Holm equation \cite{NP} 
and contrary to the case of peaked solitary waves in the Novikov equation \cite{CP}, 
we confirm that the passage from the linear to the nonlinear theory 
is false in $H^1$ as was expected in \cite{CM}. 
Although the linearized instability in $H^1$ is replaced by 
the nonlinear stability result of Theorem \ref{theorem-CS} in $H^1$.
the linearized instability in $W^{1,\infty}$ persists
as the nonlinear instability result of Theorem \ref{theorem-nonlinear}. 
This nonlinear instability arises due to the negative slopes at the right 
side of the peak, which grow unboundedly from below. 

Finally, we mention two open problems for the CH equation (\ref{CH}). 

Peaked periodic waves (\ref{peaked-traveling-wave}) appear to be at the border between two families of smooth and cusped periodic waves (see \cite{Len3} and Appendix C). Existence of smooth periodic waves was studied in \cite{GV}, whereas cusped periodic waves were analyzed in \cite{B,Him}. It is naturally to expect that the smooth periodic waves are orbitally stable in $H^1_{\rm per}(\mathbb{T}) \cap W^{1,\infty}(\mathbb{T})$ similarly to the case of the reduced 
Ostrovsky equation considered in \cite{GP1}. On the other hand, it is difficult to consider stability of cusped periodic waves due to the lack of continuity with respect to initial data and the norm inflation in $H^1_{\rm per}(\mathbb{T})$.

Another interesting problem is to study how the nonlinear instability
of peaked waves complicates the interaction dynamics of multi-peaked solutions.
Asymptotic stability of multi-peaked solitary waves is proven in \cite{Molinet1} (without anti-peakons)
and in \cite{Molinet2} (in the presence of anti-peakons). Collisions of peakons and anti-peakons
lead to jumps of the energy quantities (\ref{conserved-quantities}) \cite{BC1}, see also
relevant results in \cite{Kostenko,HGH,H1,Li,Lund}. Perturbations to multi-peakons are expected to grow in the $W^{1,\infty}$ norm similarly to perturbations to single peakons considered in \cite{NP} and in this work.




\section{Peaked periodic waves as weak solutions}

Let us rewrite the initial-value problem for the CH equation (\ref{CHconv}) in the form:
\begin{equation}
\label{CHevol}
\left\{
\begin{array}{l}
u_t + u u_x + Q[u] = 0, \quad t > 0,\\
u |_{t = 0} = u_0,
\end{array} \right.
\end{equation}
where
\begin{equation}
\label{Q-def}
Q[u](x) := \frac{1}{2} \int_{\mathbb{T}} \varphi'(x-y) q[u](y) dy, \quad
q[u] := u^2 + \frac{1}{2} u_x^2, \quad x \in \mathbb{T}.
\end{equation}
and the time dependence of $Q[u]$ and $q[u]$ is dropped for convenience. The following lemma describes properties of $Q[u]$
depending on the class of functions for $u$.

\begin{lemma}
\label{lem-nonlinear-1}
If $u \in H^1_{\rm per}(\mathbb{T})$, then $Q[u] \in C^0_{\rm per}(\mathbb{T})$.
If in addition, $u \in W^{1,\infty}(\mathbb{T})$, then $Q[u]$ is Lipschitz on $\mathbb{T}$.
\end{lemma}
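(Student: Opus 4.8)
The plan is to write $Q[u] = \tfrac12\,\varphi' \ast q[u]$ with $q[u] = u^2 + \tfrac12 u_x^2$ and to read off the regularity of the convolution from the regularity of the two factors separately. First I would record the structural facts about the kernel: from (\ref{Green-explicit}) the derivative $\varphi'$ is bounded on $\mathbb{T}$, with $\varphi'(x) = -\sinh(\pi-x)/\sinh(\pi)$ for $x\in(0,\pi)$ and the odd reflection for $x\in(-\pi,0)$, so that $\varphi' \in L^\infty(\mathbb{T})$ is piecewise smooth with a single jump of size $-2$ at $x=0$ (consistent with $\varphi'' = \varphi - 2\delta_0$ coming from (\ref{Green})) and no jump at $\pm\pi$. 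In particular $\varphi'$ has bounded variation, with $V(\varphi') = \int_\mathbb{T}|\varphi|\,dx + 2 < \infty$. On the other side, the Sobolev embedding $H^1_{\rm per}(\mathbb{T}) \hookrightarrow C^0_{\rm per}(\mathbb{T})$ makes $u$ bounded while $u_x \in L^2$, so $q[u] \in L^1(\mathbb{T})$; if moreover $u \in W^{1,\infty}(\mathbb{T})$ then $u_x \in L^\infty$ and $q[u] \in L^\infty(\mathbb{T})$.

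For the first claim I would prove uniform continuity directly. Writing $Q[u](x) = \tfrac12\int_\mathbb{T}\varphi'(z)\,q[u](x-z)\,dz$ (valid by the substitution $z = x-y$ and periodicity) and subtracting, I obtain
\[
|Q[u](x+h) - Q[u](x)| \le \tfrac12 \|\varphi'\|_{L^\infty}\, \big\| q[u](\cdot + h) - q[u](\cdot) \big\|_{L^1(\mathbb{T})}.
\]
Since translation is continuous in $L^1$, the right-hand side tends to $0$ as $h\to 0$ uniformly in $x$; together with periodicity (immediate from the definition) this gives $Q[u] \in C^0_{\rm per}(\mathbb{T})$. The point to stress is that $q[u]$ is only an $L^1$ function---the term $u_x^2$ is no better than $L^1$---so the continuity cannot be obtained by differentiating under the integral and must instead be extracted from $L^1$-continuity of translations paired with boundedness of the kernel.

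For the second claim I would instead place the boundedness on $q[u]$ and the translation difference on the kernel:
\[
|Q[u](x+h) - Q[u](x)| \le \tfrac12 \|q[u]\|_{L^\infty}\, \big\| \varphi'(\cdot + h) - \varphi'(\cdot) \big\|_{L^1(\mathbb{T})} \le \tfrac12 \|q[u]\|_{L^\infty}\, V(\varphi')\, |h|,
\]
using the standard estimate $\|\varphi'(\cdot+h)-\varphi'\|_{L^1} \le V(\varphi')\,|h|$ for functions of bounded variation. This yields the Lipschitz bound with constant $\tfrac12\|q[u]\|_{L^\infty}V(\varphi')$. The main obstacle, and the reason the argument is not a one-line differentiation, is precisely the jump of $\varphi'$ at the peak: formally $Q[u]' = \tfrac12\,\varphi\ast q[u] - q[u]$, the subtracted term arising from the $-2\delta_0$ in $\varphi''$, and it is this delta that both prevents naive differentiation under the integral and forces one to treat $\varphi'$ as a $BV$ (rather than $W^{1,1}$) kernel. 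I would therefore justify the Lipschitz estimate either through the $BV$ translation bound above, or equivalently by verifying the distributional identity $Q[u]' = \tfrac12\,\varphi\ast q[u] - q[u] \in L^\infty(\mathbb{T})$ and invoking that a periodic function with bounded weak derivative is Lipschitz.
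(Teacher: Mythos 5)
Your proof is correct, but it takes a genuinely different route from the paper's. The paper exploits the explicit form (\ref{Green-explicit}) of the Green function: it splits the convolution at the kernel's peak, writing
\begin{equation*}
Q[u](x) = \frac{m}{2} \left[ \int_{-\pi}^{x} \sinh(x-y-\pi)\, q[u](y)\, dy - \int_x^{\pi} \sinh(y-x-\pi)\, q[u](y)\, dy \right],
\end{equation*}
and then reads continuity of each term from absolute integrability of $q[u]$ (smooth $\sinh$ kernel plus a moving endpoint) and the Lipschitz property from boundedness of $q[u]$. You instead treat the kernel abstractly: for continuity you pair $\varphi' \in L^{\infty}(\mathbb{T})$ with $L^1$-continuity of translations of $q[u]$, and for the Lipschitz bound you pair $q[u] \in L^{\infty}(\mathbb{T})$ with the $BV$ translation estimate $\|\varphi'(\cdot+h)-\varphi'\|_{L^1} \leq V(\varphi')\,|h|$. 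Your structural facts check out: the jump of $\varphi'$ at $x=0$ is indeed $-2$, consistent with $\varphi'' = \varphi - 2\delta_0$ from (\ref{Green}); there is no jump at $\pm\pi$; and $V(\varphi') = \int_{\mathbb{T}} \varphi\, dx + 2 = 4$ since $\varphi \ast 1 = 2$. The alternative you sketch via the distributional identity $Q[u]' = \frac{1}{2}\varphi \ast q[u] - q[u] \in L^{\infty}(\mathbb{T})$ is also valid. What the paper's computation buys is brevity and a piecewise-explicit representation that recurs throughout the paper (the same splitting underlies the computations in Lemma \ref{lem-0} and Lemma \ref{lem-traveling}); what your argument buys is generality and an explicit constant $\frac{1}{2}\|q[u]\|_{L^\infty} V(\varphi')$ --- it needs only $\varphi' \in L^{\infty} \cap BV(\mathbb{T})$, so it would apply verbatim to any peaked kernel, and it makes explicit why naive differentiation under the integral fails (the $-2\delta_0$ in $\varphi''$), a point the paper leaves implicit.
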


\begin{proof}
The integration in (\ref{Q-def}) can be split as a sum of two terms:
\begin{equation*}
Q[u](x) = \frac{m}{2} \left[ \int_{-\pi}^{x} \sinh(x-y-\pi) q[u](y) dy - \int_x^{\pi} \sinh(y-x-\pi) q[u](y) dy \right],
\end{equation*}
where $m$ is given by (\ref{minimum}).
Since $q[u]$ is absolutely integrable if $u \in H^1_{\rm per}(\mathbb{T})$, each integral is continuous on $\mathbb{T}$.
If $u \in H^1_{\rm per}(\mathbb{T}) \cap W^{1,\infty}(\mathbb{T})$, then $q[u]$ is also bounded, so that
each integral is Lipschitz on $\mathbb{T}$.
\end{proof}

We say that $u \in C([0,T),H^1_{\rm per}(\mathbb{T})\cap W^{1,\infty}(\mathbb{T}))$
is a weak solution to the initial-value problem (\ref{CHevol}) for some maximal existence time $T > 0$ if
\begin{equation}
\label{CHweak}
\int_{0}^{T} \int_{\mathbb{T}} \left( u \psi_t + \frac{1}{2} u^2 \psi_x - Q[u] \psi \right) dx dt
+ \int_{\mathbb{T}} u_0(x) \psi(0,x) dx = 0
\end{equation}
is satisfied for every test function $\psi \in C^1([0,T] \times \mathbb{T})$ such that $\psi(T,\cdot) = 0$.

We consider the class of peaked periodic wave solutions with a single peak on $\mathbb{T}$
placed at the point $x = \xi(t)$ for every $t \in [0,T)$. Hence
we introduce the following notation:
\begin{equation}
\label{C-1-0}
C^1_{\xi} := \{ u \in H^1_{\rm per}(\mathbb{T})\cap W^{1,\infty}(\mathbb{T}) : \quad u_x \in C(\mathbb{T}\backslash \{\xi\}) \}.
\end{equation}
The following lemma shows that the single peak moves with its local characteristic speed
as in (\ref{characteristic-speed}).

\begin{lemma}
\label{lem-nonlinear-2}
Assume that $u \in C([0,T),H^1_{\rm per}(\mathbb{T})\cap W^{1,\infty}(\mathbb{T}))$ is a weak solution
to the CH equation in the form (\ref{CHweak}) and there exists $\xi(t) \in \mathbb{T}$ for $t \in [0,T)$
such that $u(t,\cdot) \in C^1_{\xi(t)}$ for $t \in [0,T)$. Then, $\xi \in C^1(0,T)$ satisfies
\begin{equation}
\label{peak}
\frac{d \xi}{dt} = u(t,\xi(t)), \quad t \in (0,T).
\end{equation}
\end{lemma}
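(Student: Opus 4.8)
The plan is to exploit that a weak solution with a single peak is a \emph{classical} solution on either side of the peak, and that the value $u(t,\xi(t))$ is single-valued and differentiable in $t$ because $u$ is continuous on $[0,T)\times\mathbb{T}$; matching the two one-sided computations of $\frac{d}{dt}u(t,\xi(t))$ will then force the characteristic relation. First I would note that the weak formulation (\ref{CHweak}) by itself cannot determine $\dot\xi$: splitting $\mathbb{T}$ at the peak, integrating by parts, and using the classical equation on each smooth piece reduces the identity to a pair of boundary integrals along the curve $x=\xi(t)$, and these cancel identically because $u$ (and hence $\tfrac12 u^2$) is continuous across the peak while only $u_x$ jumps. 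Thus the speed of the peak is invisible to the Rankine--Hugoniot mechanism, and the extra regularity coming from $u \in C([0,T),\cdot)$ and $u(t,\cdot)\in C^1_{\xi(t)}$ must be used instead.

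Concretely, the steps are as follows. First, choosing test functions $\psi$ supported in $\{x<\xi(t)\}$ or in $\{x>\xi(t)\}$ away from $t=0,T$, I deduce from (\ref{CHweak}) that $u$ satisfies $u_t + u u_x + Q[u]=0$ pointwise on each side of the peak; since $uu_x$ is continuous there and $Q[u]$ is continuous by Lemma~\ref{lem-nonlinear-1}, this also shows $u$ is $C^1$ in $t$ up to the peak curve, so the one-sided limits $u_t(t,\xi(t)^{\pm})$ and $u_x(t,\xi(t)^{\pm})$ exist. Second, since $u\in C([0,T),H^1_{\rm per}(\mathbb{T})\cap W^{1,\infty}(\mathbb{T}))$ embeds into $C([0,T)\times\mathbb{T})$, the solution is continuous across the peak, so $u(t,\xi(t)^-)=u(t,\xi(t)^+)=:U(t)$ and $Q[u]$ is single-valued there, the only jump being in $u_x$. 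Third, differentiating $U(t)=u(t,\xi(t))$ from each side by the chain rule gives the two expressions
\begin{equation*}
\dot U = u_t(t,\xi^{\pm}) + u_x(t,\xi^{\pm})\,\dot\xi,
\end{equation*}
which must agree; substituting $u_t(t,\xi^{\pm}) = -U\,u_x(t,\xi^{\pm}) - Q[u](t,\xi)$ from the classical equation then yields
\begin{equation*}
\bigl(U - \dot\xi\bigr)\bigl(u_x(t,\xi^-) - u_x(t,\xi^+)\bigr) = 0.
\end{equation*}
Because the profile has a genuine peak, $u_x(t,\xi^-)\neq u_x(t,\xi^+)$, so the characteristic relation (\ref{peak}) follows, and continuity of $u$ makes its right-hand side continuous, which upgrades $\xi$ to a $C^1$ function.

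The main obstacle is the regularity of $\xi$ needed to run the third step: a priori the peak location is only some function $t\mapsto\xi(t)$, whereas the chain-rule computation presupposes its differentiability. I would first establish that $\xi$ is continuous, using that $t\mapsto u_x(t,\cdot)$ is continuous into $L^\infty(\mathbb{T})$ so that the isolated, fixed-size jump of $u_x$ cannot move discontinuously, and then replace the chain rule by two-sided difference quotients of $u(t+h,\xi(t+h))$, splitting each quotient into a spatial increment at fixed time and a temporal increment at fixed position and passing to the limit with the one-sided $C^1$ bounds from the first step. Controlling these difference quotients uniformly, and verifying that the jump $u_x(t,\xi^-)-u_x(t,\xi^+)$ stays bounded away from zero so that the cancellation is nondegenerate, is the delicate technical point; the rest is the routine matching of one-sided limits.
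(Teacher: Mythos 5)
Your proposal follows essentially the same route as the paper's proof: derive the piecewise classical equation $u_t + u u_x + Q[u] = 0$ on each side of the peak from the weak formulation, use continuity of $Q[u]$ (Lemma \ref{lem-nonlinear-1}) and of $u$ across $x=\xi(t)$, and match the two one-sided computations of $\frac{d}{dt}u(t,\xi(t))$, concluding from $[u_x]^+_- \neq 0$ that $\frac{d\xi}{dt} = u(t,\xi(t))$, with continuity of the right-hand side upgrading $\xi$ to $C^1(0,T)$. If anything, your explicit handling of the a priori differentiability of $\xi$ (continuity of the peak location first, then two-sided difference quotients) is more careful than the paper, which simply writes the one-sided derivatives $\dot{u}^{\pm}(t)$ with $\frac{d\xi}{dt}$ inside and asserts the resulting identity almost everywhere.
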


\begin{proof}
Integrating (\ref{CHweak}) by parts for $x < \xi(t)$ and $x > \xi(t)$ on $\mathbb{T}$
and using the fact that $u(t,\cdot) \in C^0_{\rm per}(\mathbb{T})$ and $u(t,\cdot) \in C^1_{\xi(t)}$ for $t \in [0,T)$, we obtain
the following equations piecewise outside the peak's location:
\begin{equation}
\label{jump-1}
u_t(t,x) + u(t,x) u_x(t,x) + Q[u](t,x) = 0, \quad \pm \left[ x-\xi(t) \right] > 0,  \quad t \in (0,T).
\end{equation}
By Lemma \ref{lem-nonlinear-1}, $Q[u]$ is a continuous function of $x$ on $\mathbb{T}$ for $t \in [0,T)$,
hence it follows from (\ref{jump-1}) that
\begin{equation}
\label{jump-2}
[u_t]^+_- + u(t,\xi(t)) [u_x]^+_-  = 0, \quad t \in (0,T),
\end{equation}
where
$$
[v]^+_- := \lim_{x \to \xi(t)^+} v(t,x) - \lim_{x \to \xi(t)^-} v(t,x)
$$
is the jump of $v$ across the peak location at $x = \xi(t)$. On the other hand,
since $u(t,\cdot) \in C^1_{\xi(t)}$ for $t \in [0,T)$, we differentiate $u(t,\xi(t))$ continuously on both sides
from $x = \xi(t)$ and define
\begin{equation}
\label{jump-3}
\dot{u}^{\pm}(t) := \lim_{x \to \xi(t)^{\pm}} \left[ u_t + \frac{d \xi}{dt} u_x \right], \quad t \in (0,T).
\end{equation}
Since $u(t,\xi(t))$ is continuous for $t \in [0,T)$, we have $\dot{u}^+(t) = \dot{u}^-(t)$ almost everywhere for $t \in (0,T)$.
Therefore, it follows from (\ref{jump-3}) that
\begin{equation}
\label{jump-4}
[u_t]^+_- + \frac{d \xi}{dt} [u_x]^+_- = 0, \quad {\rm a.e.} \; t \in (0,T).
\end{equation}
Since $[u_x]^+_- \neq 0$ if $u \notin C^1_{\rm per}(\mathbb{T})$, then it follows from
(\ref{jump-2}) and (\ref{jump-4}) that $\xi(t)$ satisfies (\ref{peak}) almost everywhere for $t \in (0,T)$.
Since $u \in C([0,T) \times \mathbb{T})$ due to Sobolev embedding of $H^1_{\rm per}(\mathbb{T})$
into $C^0_{\rm per}(\mathbb{T})$, then $u(t,\xi(t)) \in C^0(0,T)$, so that equation (\ref{peak}) is satisfied
everywhere for $t \in (0,T)$ and $\xi \in C^1(0,T)$.
\end{proof}

As a corollary, it follows from Lemma \ref{lem-nonlinear-2} that if $u(t,x) = \varphi(x-ct)$ is the travelling peaked periodic wave,
then $c = \varphi(0) = M$ is uniquely defined with $M$ given by (\ref{maximum})
(compared to the incorrect value $c = 1$ used in \cite{Len1,Len2}).
The following lemma proves that $c = M$ by explicit computation.

\begin{lemma}
\label{lem-traveling}
The traveling peaked periodic wave $u(t,x) = \varphi(x-ct)$ satisfies the stationary equation
\begin{equation}
\label{statCH}
- c \varphi + \frac{1}{2} \varphi^2 + \frac{3}{4} \varphi \ast \varphi^2 = d,
\quad x \in \mathbb{T},
\end{equation}
with $c = M$ and $d = m^2$, where the nonlocal equation is piecewise $C^1$ on both sides from the peak at $x = 0$.
\end{lemma}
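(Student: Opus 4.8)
The plan is to substitute the travelling ansatz $u(t,x)=\varphi(x-ct)$ into the convolution form (\ref{CHconv}), pass to the travelling frame, integrate the profile equation once, and then read off the constants $c=M$ and $d=m^2$ by matching the Dirac singularities generated at the peak. The computation rests on one elementary algebraic identity. From the explicit formula (\ref{Green-explicit}) I would compute $\varphi'(x)=-m\sinh(\pi-|x|)\,\mathrm{sgn}(x)$ for $x\neq0$, with $m$ as in (\ref{minimum}), so that $\varphi^2-(\varphi')^2=m^2$ on $\mathbb{T}\setminus\{0\}$ (equivalently, this follows from $\varphi''=\varphi$ off the peak). Hence $q:=\varphi^2+\tfrac12(\varphi')^2=\tfrac32\varphi^2-\tfrac12 m^2$ on each side of the peak, and since $\varphi\ast 1=\int_{\mathbb{T}}\varphi\,dx=2$ (obtained by integrating $(1-\partial_x^2)\varphi=2\delta_0$ over $\mathbb{T}$), the nonlocal term collapses to $\tfrac12\varphi\ast q=\tfrac34\varphi\ast\varphi^2-\tfrac12 m^2$.

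Next I would carry out the travelling-wave reduction. With $u_t=-c\varphi'$ and $u_x=\varphi'$, equation (\ref{CHconv}) reads $-c\varphi'+\varphi\varphi'+\tfrac12\varphi'\ast q=0$ for $x\neq0$. Writing $\varphi\varphi'=\tfrac12(\varphi^2)'$ and $\tfrac12\varphi'\ast q=\partial_x\big(\tfrac12\varphi\ast q\big)$ and using the previous step, this becomes $\partial_x\big[-c\varphi+\tfrac12\varphi^2+\tfrac34\varphi\ast\varphi^2\big]=0$ on $\mathbb{T}\setminus\{0\}$. Each term is continuous across the peak (the convolution $\varphi\ast\varphi^2$ inherits the $C^0$ regularity of $\varphi$), so the bracket equals one and the same constant $d$ on both sides of $x=0$; this both establishes the form of the stationary equation (\ref{statCH}) and confirms that it is piecewise $C^1$ about the peak.

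It remains to determine $c$ and $d$, and this is where the main work lies, namely in the distributional computation at the peak. Setting $N:=-c\varphi+\tfrac12\varphi^2+\tfrac34\varphi\ast\varphi^2$, I would apply $1-\partial_x^2$ and use $(1-\partial_x^2)\varphi=2\delta_0$ together with $(1-\partial_x^2)(\varphi\ast\varphi^2)=2\varphi^2$. The only delicate contribution is $(1-\partial_x^2)\big(\tfrac12\varphi^2\big)$: away from the peak its classical value is $-\tfrac32\varphi^2+m^2$ (using $\varphi''=\varphi$ and $\varphi^2-(\varphi')^2=m^2$), while the corner of $\varphi$ at $x=0$ matters, since the jump $[\varphi']^+_-=-2$ makes $(\varphi^2)'=2\varphi\varphi'$ jump by $2\varphi(0)\,[\varphi']^+_-=-4M$, and this produces a singular part $+2M\delta_0$. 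Collecting the smooth and singular pieces gives $(1-\partial_x^2)N=2(M-c)\,\delta_0+m^2$.

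Finally, since $N\equiv d$ forces the left-hand side to be constant, the Dirac term must vanish, which selects $c=M$; then $d=(1-\partial_x^2)d=(1-\partial_x^2)N=m^2$, completing the identification. The hard part is precisely this bookkeeping of the delta generated by the peak through the derivative of $\varphi^2$: it is the singular contribution, rather than the smooth balance of the $\varphi^2$ terms, that pins down $c=M$, in agreement with the characteristic speed (\ref{peak}) obtained abstractly in Lemma \ref{lem-nonlinear-2}.
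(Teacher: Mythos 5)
Your proposal is correct, and every computation in it checks out (in particular $(1-\partial_x^2)\bigl(\tfrac12\varphi^2\bigr) = -\tfrac32\varphi^2 + m^2 + 2M\delta_0$, hence $(1-\partial_x^2)N = 2(M-c)\delta_0 + m^2$), but it takes a genuinely different route from the paper. Both arguments share the same preliminaries: the identity $(\varphi')^2 = \varphi^2 - m^2$ off the peak, the normalization $\varphi \ast 1 = 2$, and a single integration of the travelling-wave reduction of (\ref{CHconv}) to reach (\ref{statCH}) with an undetermined integration constant $d$. The paper then determines $c$ and $d$ by brute force: it evaluates the convolution $\varphi \ast \varphi^2$ explicitly on $(0,2\pi)$, writing $\varphi(x) = m\cosh(\pi - x)$ without the modulus and expanding in exponentials, to arrive at the closed-form identity $\tfrac34 \varphi \ast \varphi^2 + \tfrac12 \varphi^2 = M \varphi + m^2$. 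You never compute a convolution at all: you apply the Helmholtz operator $1 - \partial_x^2$ to $N := -c\varphi + \tfrac12\varphi^2 + \tfrac34 \varphi\ast\varphi^2$, use $(1-\partial_x^2)(\varphi \ast \varphi^2) = 2\varphi^2$ from the Green-function relation (\ref{Green}), and pin down $c = M$ by matching the Dirac mass $2M\delta_0$ generated by the corner of $\varphi^2$ (via the jump $[\varphi']^+_- = -2$) against $-2c\delta_0$ from the term $-c\varphi$. This is more conceptual: it makes transparent that the selection $c = M$ comes from the singularity at the peak, in agreement with the characteristic-speed result of Lemma \ref{lem-nonlinear-2}, whereas the paper's computation has the side benefit of producing $\varphi \ast \varphi^2$ in closed form. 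One small remark to make your verification airtight: as written, your delta-matching assumes $\varphi(x-ct)$ is a weak solution (so that $N \equiv d$) and deduces the \emph{necessity} of $c = M$, $d = m^2$; for the converse direction (that with $c = M$ the identity (\ref{statCH}) actually holds, which is what the paper verifies directly) you should add that $1 - \partial_x^2$ has trivial kernel on periodic distributions, so your identity $(1-\partial_x^2)N = 2(M-c)\delta_0 + m^2$ inverts to $N = (M-c)\varphi + m^2$ outright, which contains both directions at once.
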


\begin{proof}
We shall use the relation $(\varphi')^2 = \varphi^2 - m^2$ for $x \in \mathbb{T} \backslash \{0\}$, 
which follows from (\ref{Green-explicit}). 
Substituting $u(t,x) = \varphi(x-ct)$ into (\ref{CHconv}) and integrating in $x$ yields
the nonlocal equation (\ref{statCH}) where $d$ is an integration constant. In order to verify the validity
of this equation and the explicit values of $c$ and $d$, we shall consider $x \in (0,2\pi)$, for which
one can use the expression $\varphi(x) = m \cosh(\pi - x)$ without the modulus sign. By using $\varphi \ast 1 = 2$
and continuing with explicit evaluation of integrals, we derive
\begin{eqnarray*}
\varphi \ast \varphi^2 & = & \frac{m^2}{4} \left[ \varphi \ast e^{2\pi - 2x} + 2 \varphi \ast 1 + \varphi \ast e^{-2\pi + 2x} \right] \\
& = & \frac{m^2}{3} \left[ 3 + 4 \cosh(\pi) \cosh(\pi - x) - \cosh(2 \pi - 2x) \right],
\end{eqnarray*}
so that
\begin{eqnarray*}
\frac{3}{4} \varphi \ast \varphi^2 + \frac{1}{2} \varphi^2 = M \varphi + m^2,
\end{eqnarray*}
which coincides with (\ref{statCH}) for $c = M$ and $d = m^2$.
\end{proof}

By using Lemmas \ref{lem-nonlinear-2} and \ref{lem-traveling}, we shall now derive
the evolution equations for perturbations near the peaked periodic wave.
We are looking for a weak solution $u \in C([0,T),H^1_{\rm per}(\mathbb{T})\cap W^{1,\infty}(\mathbb{T}))$
to the CH equation in the form (\ref{CHweak}), for which there exists $\xi(t) = c t + a(t) \in \mathbb{T}$ for $t \in [0,T)$
such that $u(t,\cdot) \in C^1_{\xi(t)}$ for $t \in [0,T)$. We present the solution
in the form:
\begin{equation}
\label{decomp}
u(t,x) = \varphi(x-ct-a(t)) + v(t,x-ct-a(t)), \quad t \in [0,T), \quad x \in \mathbb{T},
\end{equation}
where $c = M$, $a(t)$ is the deviation of the peak position from its unperturbed position moving with the speed $c$,
and $v(t,x)$ is the perturbation to the peaked periodic wave $\varphi$.
By Lemma \ref{lem-nonlinear-2}, $a \in C^1(0,T)$ satisfies the equation
\begin{equation}
\label{peak-moves}
\frac{da}{dt} = v(t,0), \quad t \in (0,T).
\end{equation}
Substituting (\ref{decomp}) and (\ref{peak-moves}) into the initial-value problem (\ref{CHevol}) yields
the following problem for the peaked perturbation $v$:
\begin{equation}
\label{CHpert}
\left\{
\begin{array}{l}
v_t = (c - \varphi) v_x + (v|_{x=0} - v) \varphi'  + (v|_{x=0} - v) v_x \\
\qquad \qquad - \varphi' \ast \left(\varphi v + \frac{1}{2} \varphi' v_x \right) - Q[v], \qquad \qquad \qquad t \in (0,T),\\
v |_{t = 0} = v_0,
\end{array} \right.
\end{equation}
where we have used the stationary equation (\ref{statCH}) piecewise on both sides from
the peak and replaced $x - c t - a(t)$ by $x$ due to the translational invariance of the system
(\ref{CHevol}) with the convolution integral (\ref{Q-def}).

\section{Linearized evolution}

Here we study the linearized equation of motion arising in the truncation
of the nonlinear equation in system (\ref{CHpert}) at the linear terms in $v$:
\begin{equation}
\label{linCH}
v_t = (c - \varphi) v_x + (v|_{x=0} - v) \varphi' - \varphi' \ast \left(\varphi v + \frac{1}{2} \varphi' v_x \right).
\end{equation}
We first simplify the linearized equation (\ref{linCH}) by using the following elementary result.

\begin{lemma}
\label{lem-0}
Assume that $v \in H^1_{\rm per}(\mathbb{T})$. Then, it is true for every $x \in \mathbb{T}$ that
\begin{equation*}
[ v(0) - v(x)] \varphi'(x) - (\varphi' \ast \varphi v)(x) - \frac{1}{2} (\varphi' \ast \varphi' v_x)(x)
= \varphi(x) \int_0^x v(y) dy - \frac{1}{2} m^2 \sinh(x) \int_{-\pi}^{\pi} v(y) dy.
\end{equation*}
\end{lemma}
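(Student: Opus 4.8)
The plan is to remove the convolution integrals by an integration by parts that transfers the derivative off $v$, and then to observe that the remaining kernel is piecewise constant in the integration variable. Throughout I would write $I_1(x) := (\varphi' \ast \varphi v)(x)$ and $I_2(x) := (\varphi' \ast \varphi' v_x)(x)$, and I would use $\varphi'' = \varphi$ on $\mathbb{T} \setminus \{0\}$ (which follows from $(1-\partial_x^2)\varphi = 2\delta_0$), together with the jump data $\varphi'(0^\pm) = \mp 1$, $\varphi(0) = M$, and the turning-point condition $\varphi'(\pm\pi) = 0$. I would carry out the details for $x \in (0,\pi)$; the case $x \in (-\pi,0)$ is entirely analogous and $x = 0$ follows by continuity of both sides.

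First I would integrate $I_2$ by parts in $y$. Writing $a(y) := \varphi'(x-y)\varphi'(y)$, the function $a$ is piecewise $C^1$ on $\mathbb{T}$ with jumps only at $y = 0$ (from $\varphi'(y)$) and at $y = x$ (from $\varphi'(x-y)$), while the endpoint contributions at $y = \pm\pi$ drop out because $\varphi'(\pm\pi) = 0$. Collecting the two interior jumps, computed from $[\varphi']^+_-(0) = -2$ and the opposite-signed jump of $\varphi'(x-y)$ at $y = x$, both weighted by the continuous factor $\varphi'(x)$, yields
\[
\tfrac{1}{2} I_2(x) = [v(0) - v(x)]\varphi'(x) - \tfrac{1}{2}\int_{-\pi}^{\pi} a'(y)\, v(y)\, dy,
\]
where on each smooth piece $a'(y) = \varphi'(x-y)\varphi(y) - \varphi(x-y)\varphi'(y)$ after using $\varphi'' = \varphi$. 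The boundary terms here are exactly the $[v(0)-v(x)]\varphi'(x)$ contribution on the left-hand side of the lemma, so substituting this expression cancels those terms and leaves
\[
[v(0)-v(x)]\varphi'(x) - I_1(x) - \tfrac{1}{2}I_2(x) = -\tfrac{1}{2}\int_{-\pi}^{\pi} K(x,y)\, v(y)\, dy,
\]
with the symmetric kernel $K(x,y) := \varphi(x-y)\varphi'(y) + \varphi'(x-y)\varphi(y)$.

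The key observation is that, for fixed $x$, the kernel $K(x,\cdot)$ is piecewise constant in $y$: on each smooth piece $\partial_y K = \varphi(x-y)\varphi''(y) - \varphi''(x-y)\varphi(y) = 0$ by $\varphi'' = \varphi$. Since $K(x,\cdot)$ can only jump where $\varphi'(y)$ or $\varphi'(x-y)$ jumps, that is at $y = 0$ and $y = x$, it suffices to evaluate $K$ at one convenient point of each of the three intervals $(-\pi,0)$, $(0,x)$, $(x,\pi)$. Evaluating at $y \to 0^\pm$ (using $\varphi'(0^\pm) = \mp 1$ and $\varphi(0) = M$) and at $y = \pi$ (using $\varphi'(\pi) = 0$), together with the single hyperbolic identity $M\varphi'(x) + \varphi(x) = m^2\sinh(x)$, gives $K(x,y) = m^2\sinh(x)$ for $y \in (-\pi,0)\cup(x,\pi)$ and $K(x,y) = m^2\sinh(x) - 2\varphi(x)$ for $y \in (0,x)$. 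Integrating this piecewise-constant kernel against $v$ then produces exactly $-\tfrac{1}{2}\big[ m^2\sinh(x)\int_{-\pi}^\pi v\,dy - 2\varphi(x)\int_0^x v\,dy \big]$, which is the claimed right-hand side.

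I expect the main obstacle to be bookkeeping rather than anything conceptual: getting the signs and locations of the two interior jumps correct in the integration by parts (in particular, that the jump of $\varphi'(x-y)$ at $y = x$ is opposite in sign to that of $\varphi'(y)$ at $y = 0$), and confirming that no spurious jump of $K$ is introduced when the periodic argument $x-y$ wraps around $\pm\pi$. The piecewise-constancy of $K$ is what keeps the argument clean, since it reduces the whole computation to evaluating $K$ at the three points $y = 0^\pm$ and $y = \pi$ and invoking one addition formula, thereby avoiding any explicit evaluation of the convolution integrals over the wrapped region.
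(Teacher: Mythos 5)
Your proof is correct and takes essentially the same route as the paper: your classical integration by parts with jump bookkeeping at $y=0$ and $y=x$ is precisely the paper's distributional computation $(\varphi' \ast \varphi' v_x) = (\varphi'' \ast \varphi' v) - (\varphi' \ast \varphi'' v)$ with $\varphi'' = \varphi - 2\delta_0$, and both arguments reduce the left-hand side to $-\tfrac{1}{2}(\varphi' \ast \varphi v) - \tfrac{1}{2}(\varphi \ast \varphi' v)$. Your piecewise-constancy observation for $K$ is merely a tidier packaging of the paper's explicit hyperbolic evaluation of this expression on $(-\pi,0)$, $(0,x)$, $(x,\pi)$, which produces the same constants $m^2\sinh(x)$ and $m^2\sinh(x) - 2\varphi(x)$ that you obtain from sample-point evaluation.
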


\begin{proof}
Since integrals of absolutely integrable functions are continuous, the map
$$
x \mapsto \varphi' \ast \left( \varphi v + \frac{1}{2} \varphi' v_x \right)
$$
is continuous for every $x \in \mathbb{T}$. Now, $H^1_{\rm per}(\mathbb{T})$ is continuously
embedded into the space of continuous and periodic functions on $\mathbb{T}$, hence $v \in C^0_{\rm per}(\mathbb{T})$.
Integrating by parts yields the following explicit expression for every $x \in \mathbb{T}$:
\begin{eqnarray*}
\left( \varphi' \ast \varphi' v_x\right)(x)
& = & \left( \varphi'' \ast \varphi' v \right)(x) - \left( \varphi' \ast \varphi'' v \right)(x) \\
& = & \left( \varphi \ast \varphi' v \right)(x) - 2 \varphi'(x) v(x) - \left( \varphi' \ast \varphi v \right)(x) + 2\varphi'(x)  v(0),
\end{eqnarray*}
which yields
\begin{equation*}
[ v(0) - v(x)] \varphi'(x) - (\varphi' \ast \varphi v)(x) - \frac{1}{2} (\varphi' \ast \varphi' v_x)(x)
= -\frac{1}{2} (\varphi' \ast \varphi v)(x) - \frac{1}{2} (\varphi \ast \varphi' v)(x).
\end{equation*}
Furthermore, we obtain for $x \in (0,\pi]$:
\begin{eqnarray*}
& \phantom{t} & -\frac{1}{2} \left( \varphi \ast \varphi' v\right)(x) - \frac{1}{2} \left( \varphi' \ast \varphi v\right)(x) \\
& = & -\frac{m^2}{2} \left[ \int_{-\pi}^0 \sinh(x) v(y) dy
- \int_0^x \sinh(2\pi - x) v(y) dy + \int_x^{\pi} \sinh(x) v(y) dy \right] \\
& = & \varphi(x) \int_0^x v(y) dy - \frac{m^2}{2} \sinh(x) \int_{-\pi}^{\pi} v(y) dy,
\end{eqnarray*}
which completes the proof of the equality for $x \in (0,\pi]$. For $x \in [-\pi,0)$, 
the computations are similar:
\begin{eqnarray*}
& \phantom{t} & -\frac{1}{2} \left( \varphi \ast \varphi' v\right)(x) - \frac{1}{2} \left( \varphi' \ast \varphi v\right)(x) \\
& = & -\frac{m^2}{2} \left[ \int_{-\pi}^x \sinh(x) v(y) dy
+ \int_x^0 \sinh(2\pi + x) v(y) dy + \int_0^{\pi} \sinh(x) v(y) dy \right] \\
& = & \varphi(x) \int_0^x v(y) dy - \frac{m^2}{2} \sinh(x) \int_{-\pi}^{\pi} v(y) dy.
\end{eqnarray*}
The zero value at $x = 0$ is recovered by taking the one-sided limits $x \to 0^{\pm}$ in the previous two expressions. 
\end{proof}

By Lemma \ref{lem-0}, we can rewrite the initial-value problem for the linear equation (\ref{linCH})
in the equivalent form:
\begin{equation}
\label{linCH-equiv}
\left\{ \begin{array}{l}
v_t = (c - \varphi) v_x + \varphi w - \pi m^2 \bar{v} \sinh(x), \quad t > 0, \\
v |_{t = 0} = v_0, \end{array} \right.
\end{equation}
where we have introduced
\begin{equation}
\label{w-definition}
w(t,x) := \int_0^x v(t,y) dy, \quad \bar{v}(t) := \frac{1}{2\pi} \int_{-\pi}^{\pi} v(t,y) dy = \frac{w(t,\pi) - w(t,-\pi)}{2\pi}.
\end{equation}
Let us consider the linearized inital-value problem (\ref{linCH-equiv})
in the space $C^1_0$ defined by (\ref{C-1-0}) with $\xi \equiv 0$. The following lemma shows
that the values of $v(t,0)$ and $\bar{v}(t)$ are independent of $t$.

\begin{lemma}
\label{lem-1}
Assume that there exists a solution $v \in C(\mathbb{R}^+,C^1_0)$ to the initial-value problem (\ref{linCH-equiv}).
Then, $v(t,0) = v_0(0)$ and $\bar{v}(t) = \bar{v}_0$ for every $t \in \mathbb{R}^+$.
\end{lemma}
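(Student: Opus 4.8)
The plan is to prove the two conservation statements separately, exploiting the special structure of the right-hand side of \eqref{linCH-equiv} at the peak $x=0$ and after integration over $\mathbb{T}$. The only genuine subtlety throughout is that both $v_x$ and $\varphi'$ jump at $x=0$, so every integration by parts must be split at the peak and every boundary contribution at $0^{\pm}$ and at $\pm\pi$ must be tracked. The structural facts I would use are $c=\varphi(0)=M$, the relation $\varphi''=\varphi$ on $\mathbb{T}\setminus\{0\}$ coming from \eqref{Green}--\eqref{Green-explicit}, the vanishing of the derivative at the turning points $\varphi'(\pm\pi)=0$, and the normalization $w(t,0)=\int_0^0 v\,dy=0$ from \eqref{w-definition}.

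For the conservation of $v(t,0)$, I would evaluate \eqref{linCH-equiv} in the limits $x\to 0^{\pm}$. Every term on the right-hand side vanishes at the peak: the transport term $(c-\varphi)v_x$ tends to $0$ because $c-\varphi(x)\to c-\varphi(0)=0$ while $v_x$ stays bounded by $v\in W^{1,\infty}(\mathbb{T})$; the term $\varphi w$ vanishes since $w(t,0)=0$; and $-\pi m^2\bar v\sinh(x)$ vanishes since $\sinh(0)=0$. Hence the right-hand side is continuous at $x=0$ with value $0$, so $\frac{d}{dt}v(t,0)=0$ and therefore $v(t,0)=v_0(0)$ for all $t\in\mathbb{R}^+$.

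For the conservation of $\bar v(t)$, I would integrate \eqref{linCH-equiv} over $\mathbb{T}$ and divide by $2\pi$. The $\sinh$ term drops out by oddness, so it remains to show $\int_{-\pi}^{\pi}\bigl[(c-\varphi)v_x+\varphi w\bigr]dx=0$. Splitting at $x=0$ and integrating the transport term by parts, the peak contributions vanish because $c-\varphi(0)=0$, and the endpoint contributions at $\pm\pi$ cancel by periodicity of $v$; this reduces $\int(c-\varphi)v_x\,dx$ to $\int\varphi' v\,dx$. I would then recognize $\varphi' v+\varphi w=\frac{d}{dx}(\varphi' w)$ on each side of the peak, using $w_x=v$ together with $\varphi''=\varphi$. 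Integrating this exact derivative piecewise, the boundary terms vanish: at $x=0$ because $w(0)=0$ (which kills the jump of $\varphi'$), and at $x=\pm\pi$ because $\varphi'(\pm\pi)=0$. Thus the integral is zero, $\frac{d}{dt}\bar v(t)=0$, and $\bar v(t)=\bar v_0$.

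I expect the main obstacle to be bookkeeping rather than conceptual: keeping the one-sided limits at the peak and at the endpoints straight so that no spurious jump terms survive. The conceptual content is that the choice $c=M$ is exactly what makes the coefficient $(c-\varphi)$ of the only derivative-bearing term vanish at the singular point, which is what freezes $v(t,0)$; and that the identity $\varphi''=\varphi$ converts the combined transport-plus-nonlocal contribution into a perfect derivative, which is what freezes $\bar v(t)$.
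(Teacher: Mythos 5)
Your proposal is correct and follows essentially the same route as the paper's proof: the first claim is obtained exactly as in the paper by evaluating the right-hand side of the evolution equation in the limits $x \to 0^{\pm}$, where $c = \varphi(0)$ and $w(t,0) = 0$ make every term vanish, and the second claim by integrating over $\mathbb{T}$ (dropping the odd $\sinh$ term) and integrating by parts piecewise on $[-\pi,0]$ and $[0,\pi]$. Your explicit identity $\varphi' v + \varphi w = \frac{d}{dx}(\varphi' w)$, via $\varphi'' = \varphi$ off the peak, is precisely the content of the paper's combined boundary term $(\varphi(0)-\varphi(x))v(t,x) + \varphi'(x)\int_0^x v(t,y)\,dy$ evaluated at $0^{\pm}$ and $\pm\pi$, with the same cancellations from $w(t,0)=0$, $\varphi'(\pm\pi)=0$, and periodicity.
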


\begin{proof}
If $v \in C(\mathbb{R}^+,C^1_0)$, then $w \in C(\mathbb{R}^+,C^1(\mathbb{T}))$
so that $w(t,0) = 0$ follows from (\ref{w-definition}). Hence, it follows 
from (\ref{linCH-equiv}) that 
$$
\lim_{x \to 0^{\pm}} v_t(t,x) = 0, \quad t \in \mathbb{R}^+,
$$
due to $\varphi(0) = c$ and $v(t,\cdot) \in C^1_0$ for every $t \in \mathbb{R}^+$.
Hence, $v(t,0) = v_0(0)$ for every $t \in \mathbb{R}^+$.

Integrating the evolution equation for the solution $v \in C(\mathbb{R}^+,C^1_0)$ in $x$ on $\mathbb{T}$ 
and using integration by parts piecewise on $[-\pi,0]$ and $[0,\pi]$, we obtain for every $t \in \mathbb{R}^+$:
\begin{eqnarray*}
\frac{d}{dt} \bar{v}(t) & = & \frac{1}{2\pi} \int_{\mathbb{T}} (\varphi(0) - \varphi(x)) v_x(t,x) dx + \frac{1}{2\pi} \int_{\mathbb{T}} \varphi(x) w(t,x) dx\\
 & = & \frac{1}{2\pi} \left[ (\varphi(0) - \varphi(x)) v(t,x) + \varphi'(x) \int_0^{x} v(t,y) dy \right] \biggr|_{x=0^+}^{x = \pi} +
 \biggr|_{x = -\pi}^{x = 0^-}  \\
 & = & 0,
\end{eqnarray*}
due to $\varphi(\pi) = \varphi(-\pi)$, $\varphi'(\pm \pi) = 0$, and $v(t,\cdot) \in C^0_{\rm per}(\mathbb{T})$ for every $t \in \mathbb{R}^+$.
Hence, $\bar{v}(t) = \bar{v}(0)$ for every $t \in \mathbb{R}^+$.
\end{proof}

The evolution problem (\ref{linCH-equiv}) can be solved explicitly by using the method of characteristics
piecewise for $x \in \mathbb{T}$ on both sides of the peak at $x = 0$. The family of characteristic curves
$x = X(t,s)$ satisfies the initial-value problem
\begin{equation}
\label{char}
\left\{ \begin{array}{l}
\frac{dX}{dt} = \varphi(X)-\varphi(0), \\
X |_{t=0} = s, \end{array} \right.
\end{equation}
for every $s \in \mathbb{T}$. 

Along each characteristic curve $x = X(t,s)$ satisfying (\ref{char}), 
function $W(t,s) := w(t,X(t,s))$ satisfies the initial-value problem:
\begin{equation}
\label{w-eq}
\left\{ \begin{array}{l}
\frac{dW}{dt} = \varphi'(X(t,s)) W + \pi m^2 \bar{v} [ 1 - \cosh(X(t,s))], \\
W |_{t=0} = w_0(s), \end{array} \right.
\end{equation}
where $w_0(x) := \int_0^x v_0(y) dy$. The initial-value problem (\ref{w-eq})  
is obtained by linearizing 
the nonlinear equation (\ref{w-appendix}) in Appendix A and posing it at the characteristic  curve $x = X(t,s)$. It follows from (\ref{linCH-equiv}) and (\ref{char}) that
$V(t,s) := v(t,X(t,s))$ satisfies the initial-value problem:
\begin{equation}
\label{v-eq}
\left\{ \begin{array}{l}
\frac{dV}{dt} = \varphi(X(t,s)) W(t,s) - \pi m^2 \bar{v} \sinh(X(t,s)), \\
V |_{t=0} = v_0(s). \end{array} \right.
\end{equation}
The following lemma gives the solution of the initial-value problem
(\ref{linCH-equiv}) in class $C^1(\mathbb{R}^+,C^1_0)$ from
solutions to the initial-value problems (\ref{char}), (\ref{w-eq}), and (\ref{v-eq}).

\begin{lemma}
\label{lem-2}
For every $v_0 \in C^1_0$, there exists the unique solution
$v \in C^1(\mathbb{R}^+,C^1_0)$ to the initial-value problem (\ref{linCH-equiv}).
\end{lemma}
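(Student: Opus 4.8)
The plan is to construct the solution explicitly by the method of characteristics, exploiting the crucial simplification provided by Lemma \ref{lem-1}: since any candidate solution conserves both $v(t,0) = v_0(0)$ and the mean $\bar{v}(t) = \bar{v}_0$, we may treat $\bar{v}$ as the \emph{fixed constant} $\bar{v}_0$ determined by the initial data. With $\bar{v}$ frozen, the characteristic system (\ref{char}), (\ref{w-eq}), (\ref{v-eq}) decouples into a flow ODE followed by two linear scalar ODEs that can be solved by quadrature. Concretely, I would first solve (\ref{char}) to obtain the flow $X(t,s)$, then solve (\ref{w-eq}) for $W(t,s)$ as a linear inhomogeneous ODE, integrate (\ref{v-eq}) directly for $V(t,s)$, and finally reconstruct $v(t,x) := V(t, \Phi_t^{-1}(x))$, where $\Phi_t := X(t,\cdot)$.

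First I would analyze the characteristic flow. Since $\varphi(X) - \varphi(0) \le 0$ with equality only at $X = 0$ (because $\varphi(0) = M$ is the global maximum by (\ref{maximum})), the point $x = 0$ is a fixed point and the vector field is strictly negative elsewhere; its corner at $x=0$ (where $\varphi'(0^\pm) = \mp 1$) is Lipschitz from each side, so solutions exist, are unique, and are global in $t$ on $\mathbb{T} \setminus \{0\}$. The fixed point is \emph{semistable}: characteristics starting at $s \in (0,\pi)$ satisfy $\dot{X} \approx -X$ and decay to $0^+$ as $t \to \infty$ without reaching it, while those starting at $s \in (-\pi,0)$ satisfy $\dot{X} \approx X$, drift away from the peak, cross $x = \pm\pi$ in finite time, and then flow back down toward $0^+$. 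For each fixed $t$ the time-$t$ map $\Phi_t$ is therefore an orientation-preserving homeomorphism of $\mathbb{T}$ fixing $0$ and a diffeomorphism on $\mathbb{T}\setminus\{0\}$, so it is invertible; moreover $\partial_s X$ solves the variational equation $\tfrac{d}{dt}\partial_s X = \varphi'(X)\,\partial_s X$ and stays finite and nonzero for every fixed $t$, which is what the inversion requires.

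Next I would solve the linear equations. Observing that both $g(t,s) := \varphi(X(t,s)) - \varphi(0)$ and $\partial_s X$ satisfy the homogeneous equation $\dot{h} = \varphi'(X) h$, the integrating factor for (\ref{w-eq}) is explicit, and variation of parameters yields a closed formula for $W$; integrating (\ref{v-eq}) then gives $V$ by quadrature. Reconstructing $v(t,x) = V(t,\Phi_t^{-1}(x))$, I would verify that $v$ solves (\ref{linCH-equiv}), that the construction is self-consistent (the antiderivative relation $\partial_s W = V\,\partial_s X$ is preserved in $t$, since its $t$-derivative equals $\varphi'(X)\,(\partial_s W - V\,\partial_s X) + (\varphi'' - \varphi)\,W\,\partial_s X$ and $\varphi'' = \varphi$ off the peak by (\ref{Green}), while it vanishes at $t=0$), and that the frozen value $\bar{v}_0$ is genuinely the mean of the reconstructed $v$, which closes the loop by the conservation computation of Lemma \ref{lem-1}. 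The peak at $x=0$ persists and $v$ stays continuous there because along $s=0$ one has $X \equiv 0$, $W \equiv 0$, and $\sinh X \equiv 0$, so $V(t,0) = v_0(0)$ for all $t$.

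The main obstacle will be the regularity bookkeeping near the degenerate fixed point, i.e. showing $v(t,\cdot) \in C^1_0$ for every $t$. Using $v_x = \partial_s V / \partial_s X$, I must check that $v_x$ is continuous on $\mathbb{T}\setminus\{0\}$, that its one-sided limits at the peak exist, and above all that $v \in W^{1,\infty}(\mathbb{T})$, despite the exponential compression ($\partial_s X \sim e^{-t}$ near $0^+$) and stretching ($\partial_s X \sim e^{t}$ near $0^-$) induced by the flow. This is precisely where the instability is encoded, so tracking the (finite but $t$-growing) bound on $\|v_x\|_{L^\infty}$ requires care at the corner and near the turning points $x = \pm\pi$, where $\varphi' = 0$. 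Finally, uniqueness is comparatively soft: by Lemma \ref{lem-1} any solution $v \in C^1(\mathbb{R}^+, C^1_0)$ necessarily has $v(t,0) = v_0(0)$ and $\bar{v}(t) = \bar{v}_0$, so it satisfies the \emph{same} characteristic ODEs (\ref{char}), (\ref{w-eq}), (\ref{v-eq}); standard ODE uniqueness then forces it to coincide with the constructed solution.
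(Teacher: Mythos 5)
Your proposal is correct and follows essentially the same route as the paper's proof: solve the triangular characteristic system (\ref{char}), (\ref{w-eq}), (\ref{v-eq}) with $\bar{v}$ frozen at $\bar{v}_0$, invert the flow using the positivity of $\partial X/\partial s$ as in (\ref{property-1}), and obtain $C^1_0$ regularity of $v(t,\cdot)$ from the $C^1$ dependence of $X, W, V$ on $s$ over a compact parameter interval. The only differences are organizational: the paper sidesteps the crossing of $x = \pm\pi$ by working on the invariant interval $[0,2\pi]$, whose endpoints are both fixed points of the flow, whereas you track the wrap-around on $\mathbb{T}$ directly and, commendably, make explicit the consistency check $\partial_s W = V\,\partial_s X$ (needed so that $W$ really is the antiderivative of the reconstructed $v$), which the paper leaves implicit and only records in Appendix B.
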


\begin{proof}
By the existence and uniqueness theory for differential equations,
there exists the unique solution $X(\cdot,s) \in C^1(\mathbb{R}^+)$ to (\ref{char}) for every $s \in \mathbb{T}$
due to the Lipschitz continuity of $\varphi$ on $\mathbb{T}$. The peak's location at $X = 0$
is a critical point which remains invariant under the time flow, hence $X(t,0) = 0$.
The ends of $\mathbb{T}$ at $x = \pm \pi$ are not invariant under the time flow.
However, the adjacent peak's location at $X = 2\pi$ is also a critical point invariant under the time flow with $X(t,2\pi) = 2\pi$, hence the interval $[0,2\pi]$ is invariant under the time flow and it makes sense to consider the initial-value problem (\ref{char}) on the interval $[0,2\pi]$.

By the continuous dependence
of solutions on the initial data, the map $(0,2\pi) \ni s \mapsto X(t,s) \in (0,2\pi)$
is $C^1$ for every $t \in \mathbb{R}^+$. Moreover, the transformation is invertible because
\begin{equation}
\label{property-1}
\frac{\partial X}{\partial s} = e^{\int_0^t \varphi'(X(t',s)) dt'} > 0, \quad t \in \mathbb{R}^+, \quad s \in (0,2\pi).
\end{equation}

By substituting the solution $X(t,s)$ to (\ref{char}) 
into (\ref{w-eq}) and solving the linear inhomogeneous equation for $W$,
we obtain the unique solution $W(\cdot,s) \in C^1(\mathbb{R}^+)$ to (\ref{w-eq}) for every fixed $s \in [0,2\pi]$.
Moreover, since $v_0 \in C^0_{\rm per}(\mathbb{T})$, the map $(0,2\pi) \ni s \mapsto W(t,s) \in \mathbb{R}$
is $C^1$ for every $t \in \mathbb{R}^+$. The right-hand side of (\ref{w-eq}) implies
$W(t,0) = 0 = w_0(0)$ and $W(t,2\pi) = 2\pi \bar{v} = w_0(2\pi)$, where the last equality is due to invariance of the mean value of a periodic function in $C^0_{\rm per}(\mathbb{T})$
with respect to any starting point on $\mathbb{T}$. .

Similarly, by substituting the solutions $X(t,s)$ and $W(t,s)$ 
to (\ref{char}) and (\ref{w-eq}) into (\ref{v-eq}) and integrating 
it for $V$, 
we obtain the unique solution $V(\cdot,s) \in C^1(\mathbb{R}^+)$ to (\ref{v-eq}) for every fixed $s \in [0,2\pi]$.
Moreover, since $v_0 \in C^1_0$, the map $(0,2\pi) \ni s \mapsto V(t,s) \in \mathbb{R}$
is $C^1$ for every $t \in \mathbb{R}^+$. The right-hand side of (\ref{v-eq}) implies
$V(t,0) = v_0(0)$ and $V(t,2\pi) = v_0(2\pi) = v_0(0)$. Since $v_0 \in C^1_0$, 
then $V(t,\cdot) \in C^1_0$  for every $t \in \mathbb{R}^+$.

Finally, the change of coordinates $\mathbb{R}^+ \times (0,2\pi) \ni (t,s) \to (t,X) \in \mathbb{R}^+ \times (0,2\pi)$
is a diffeomorphism due to (\ref{property-1}).
Since $\mathbb{T}$ is compact, the solution $v(t,\cdot) = V(t,s = X^{-1}(t,\cdot))$ belongs to $C^1_0$ for every $t \in \mathbb{R}^+$.
\end{proof}

Although the proof of Lemma \ref{lem-2} does not rely on construction of the exact solutions
to the initial-value problems (\ref{char}), (\ref{w-eq}), and (\ref{v-eq}), it is easy to obtain exact solutions 
(see Appendix B). By analyzing the exact solution (\ref{v-sol}), we show that $v(t,\cdot)$ remains bounded in the $L^{\infty}$ norm,
as in the following lemma.

\begin{lemma}
\label{lem-3}
Assume that $v_0 \in C^1_0$ in the initial-value problem (\ref{linCH-equiv}).
Then, there exists $C_0 > 0$ such that
\begin{equation}
\label{bounds-lin-1}
\| v(t,\cdot) \|_{L^{\infty}(\mathbb{T})} \leq C_0, \quad t \in \mathbb{R}^+.
\end{equation}
\end{lemma}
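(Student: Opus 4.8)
The plan is to read off the bound from the explicit solution of the characteristic system (\ref{char}), (\ref{w-eq}), (\ref{v-eq}), i.e. from the exact formula (\ref{v-sol}) of Appendix B. Since Lemma \ref{lem-2} shows that $s \mapsto X(t,s)$ is a diffeomorphism of $[0,2\pi]$ for each $t$, we have $\|v(t,\cdot)\|_{L^{\infty}(\mathbb{T})} = \sup_{s \in [0,2\pi]} |V(t,s)|$, so it suffices to bound $V(t,s)$ uniformly in $t \ge 0$ and $s \in [0,2\pi]$.

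First I would record the Jacobian identity that drives everything. Differentiating (\ref{char}) gives $\frac{d}{dt}[\varphi(X) - M] = \varphi'(X)(\varphi(X) - M)$, so integrating along a characteristic yields $\int_0^t \varphi'(X(t',s))\,dt' = \ln\frac{\varphi(X(t,s)) - M}{\varphi(s) - M}$, and hence by (\ref{property-1})
\[
\frac{\partial X}{\partial s}(t,s) = \frac{\varphi(X(t,s)) - M}{\varphi(s) - M} > 0 .
\]
Because $\varphi(X) - M \le 0$ with equality only at the peak, each characteristic with $s \in (0,2\pi)$ decreases monotonically toward $X = 0$, and the linearization $\dot X \approx -X$ near $X=0$ shows that $X(t,s) \to 0$ exponentially fast.

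The next step is to integrate (\ref{w-eq}) and (\ref{v-eq}) explicitly. The homogeneous factor of (\ref{w-eq}) is exactly $\partial X/\partial s$ above, so variation of parameters together with the substitution $dt' = dX/(\varphi(X) - M)$ turns the inhomogeneous integral into an elementary one: using $\varphi(X) - M = -2m\sinh(\pi - X/2)\sinh(X/2)$ and $1 - \cosh X = -2\sinh^2(X/2)$, the $X$-integrand collapses to a multiple of $\sinh^{-2}(\pi - X/2)$, whose antiderivative is proportional to $\coth(\pi - X/2)$. This yields a closed form for $W(t,s)$ built from $(\partial X/\partial s)\,w_0(s)$ and terms of the type $(\varphi(X) - M)\coth(\pi - X/2)$, each of which is bounded and decays to zero as $t \to \infty$. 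The identical change of variables integrates (\ref{v-eq}); since $\varphi(X)$ is bounded while $W$ and $\sinh X$ decay exponentially as $X \to 0$, the time integrals converge and $V(t,s)$ tends to a finite limit for every fixed $s$.

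The hard part will be uniformity in $s$ near the peak. At the unstable fixed point $s = 2\pi$ one has $\partial X/\partial s \approx e^{t} \to \infty$, so the building blocks $(\partial X/\partial s)\,w_0(s)$ and $(\varphi(X) - M)\coth(\pi - s/2)$ individually grow like $e^{t}$; only their particular combination in the closed form stays bounded, reproducing the correct limit $W(t,2\pi) = w_0(2\pi) = 2\pi\bar v$. I would therefore not estimate terms separately but exploit these cancellations directly, using the conservation laws of Lemma \ref{lem-1}, namely $v(t,0) = v_0(0)$ and $\bar v$ constant, to pin down the boundary values, and then verify that the cancellation persists in the joint limit $s \to 2\pi$, $t \to \infty$. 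Performing the symmetric computation on the left of the peak gives a single constant $C_0$ dominating $|V(t,s)|$ for all $t \ge 0$ and $s \in [0,2\pi]$, which is precisely the bound (\ref{bounds-lin-1}).
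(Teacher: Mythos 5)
Your proposal follows essentially the same route as the paper: reduce to the exact characteristic solution (\ref{char-sol})--(\ref{v-sol}) of Appendix B, use $\| v(t,\cdot) \|_{L^{\infty}(\mathbb{T})} = \max_{s \in [0,2\pi]} |V(t,s)|$, and obtain uniform boundedness by exploiting the cancellation between the $e^{t}$-growth near the unstable fixed point $s = 2\pi$ (the paper's $Y(t,s)$, your $\partial X/\partial s$) and the vanishing of the prefactor there, pinned down by $w_0(2\pi) = 2\pi \bar{v}$ and the conservation of $\bar{v}$. The step you defer (``verify that the cancellation persists in the joint limit'') is exactly what the paper carries out via the regrouping (\ref{tech-eq}) and the majorant $Y(t,s) \leq Z(t,s)$ with $Z(t,2\pi) = M e^{t}$, and your supporting identities (the Jacobian formula $\partial X/\partial s = (\varphi(X)-M)/(\varphi(s)-M)$ and the collapse of the inhomogeneous integrand to a multiple of $\sinh^{-2}(\pi - X/2)$) are all correct, so your plan sets up precisely the paper's computation.
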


\begin{proof}
By Lemma \ref{lem-2}, the unique solution to the initial-value problem
(\ref{linCH-equiv}) with $v_0 \in C^1_0$ satisfies $v(t,\cdot) \in C^1_0$
for every $t \in \mathbb{R}^+$. By Sobolev's embedding, $v(t,\cdot) \in L^{\infty}(\mathbb{T})$
for every $t \in \mathbb{R}^+$. It remains to obtain the bound (\ref{bounds-lin-1}) uniformly in $t \in \mathbb{R}^+$.

It suffices to consider the exact solution (\ref{v-sol}) from Appendix B for $s \in [0,2\pi]$ and $t \in \mathbb{R}^+$
since
$$
\| v(t,\cdot) \|_{L^{\infty}(\mathbb{T})} = \max_{s \in [0,2\pi]} |V(t,s)|.
$$
Since $v_0, w_0 \in L^{\infty}(\mathbb{T})$, we only need to estimate $Y(t,s)$ in (\ref{v-sol}).
By using elementary transformations, we rewrite $Y(t,s)$ in the form:
\begin{equation*}
Y(t,s) = \frac{(1-e^{-t}) [\sinh(\pi - \frac{s}{2}) \cosh(\pi - \frac{s}{2}) + e^{-t} \sinh(\frac{s}{2}) \cosh(\frac{s}{2})]}{
[\sinh(\pi - \frac{s}{2}) + e^{-t} \sinh(\frac{s}{2})]^2 + 2 (\cosh \pi - 1) e^{-t} \sinh(\frac{s}{2}) \sinh(\pi - \frac{s}{2})}.
\end{equation*}
For $s \in [0,\pi]$, we have
$$
0 \leq Y(t,s) \leq \frac{\sinh(\pi - \frac{s}{2}) \cosh(\pi - \frac{s}{2}) + \sinh(\frac{s}{2}) \cosh(\frac{s}{2})}{\sinh^2(\pi - \frac{s}{2})},
$$
which is bounded on $[0,\pi]$ uniformly in $t \in \mathbb{R}^+$. For $s \in [\pi,2 \pi]$, it follows that
\begin{eqnarray*}
0 \leq Y(t,s) & \leq & \frac{\cosh(\frac{s}{2}) [\sinh(\pi - \frac{s}{2}) + e^{-t} \sinh(\frac{s}{2})] - \sinh(\pi - \frac{s}{2}) 
[\cosh(\frac{s}{2}) - \cosh(\pi - \frac{s}{2})]}{
	[\sinh(\pi - \frac{s}{2}) + e^{-t} \sinh(\frac{s}{2})]^2} \\
& \leq & \frac{\cosh(\frac{s}{2})}{\sinh(\pi - \frac{s}{2}) + e^{-t} \sinh(\frac{s}{2})} =: Z(t,s).
\end{eqnarray*}
Since 
$$
\frac{\partial Z}{\partial s} = \frac{\cosh(\pi) - e^{-t}}{2 [\sinh(\pi - \frac{s}{2}) + e^{-t} \sinh(\frac{s}{2})]^2} > 0,
$$
the map $[\pi,2\pi] \ni s \mapsto Z(t,s) \in \mathbb{R}^+$ is monotonically increasing for $t \in \mathbb{R}^+$ so that 
$$
\max_{s \in [\pi,2\pi]} Z(t,s) = Z(t,2\pi) = M e^{t},
$$
which is exponentially growing in $t \in \mathbb{R}^+$. We also have
\begin{eqnarray}
\nonumber
w_0(s) - \pi m^2 \bar{v} (\cosh s - 1) (1 - e^{-t}) \\
= \int_{2\pi}^s v_0(s') ds'
+ \pi m^2 \bar{v} ( \cosh 2\pi - \cosh s) + \pi m^2 \bar{v} e^{-t} (\cosh s - 1).
\label{tech-eq}
\end{eqnarray}
The third term in (\ref{tech-eq}) multiplied by $Y(t,s)$ is bounded on $[\pi,2\pi]$ by
$$
\pi m^2 |\bar{v}| e^{-t} (\cosh s - 1) Y(t,s) \leq 2 \pi |\bar{v}| e^{-t} Z(t,2\pi) \leq 2 \pi M |\bar{v}|.
$$
The second term in (\ref{tech-eq}) multiplied by $Y(t,s)$ is bounded on $[\pi,2\pi]$ by
\begin{eqnarray*}
\pi m^2 |\bar{v}| ( \cosh 2\pi - \cosh s) Y(t,s) & \leq & 
2 \pi m^2 |\bar{v}| 
\sinh\left(\pi + \frac{s}{2}\right) \sinh\left(\pi - \frac{s}{2}\right) Z(t,s) \\
& \leq & 4 \pi M |\bar{v}| \frac{\sinh(\pi - \frac{s}{2}) \cosh(\frac{s}{2})}{\sinh(\pi - \frac{s}{2}) + e^{-t} \sinh(\frac{s}{2})} \\
& \leq & 4 \pi M |\bar{v}| \cosh(\pi).
\end{eqnarray*}
Finally, since $v_0 \in L^{\infty}(\mathbb{T})$, it follows for every $s \in [\pi,2\pi]$ that
$$
\left| \int_{2\pi}^s v_0(s') ds' \right| \leq 2 \| v_0 \|_{L^{\infty}(\mathbb{T})} \left( \pi - \frac{s}{2} \right)
\leq 2 \| v_0 \|_{L^{\infty}(\mathbb{T})} \sinh\left( \pi - \frac{s}{2} \right).
$$
Hence, the first term in (\ref{tech-eq}) multiplied by $Y(t,s)$ is bounded by the same estimate as the second term is.
All estimates together yield the bound (\ref{bounds-lin-1}).
\end{proof}

The result of Lemma \ref{lem-3} suggests the linear stability of peaked perturbations in the $L^{\infty}$ norm.
The following lemma shows that the perturbations grow in the $W^{1,\infty}$ norm and this growth is generic
at the right side of the peak.

\begin{lemma}
\label{lem-4}
Assume that $v_0 \in C^1_0$ in the initial-value problem (\ref{linCH-equiv}).
Then, it follows that
\begin{equation}
\label{bounds-lin-2}
\lim_{x \to 0^+} v_x(t,x) = e^t v_0'(0^+) + \left(M v_0(0) - \pi m^2 \bar{v}\right) (e^t - 1)
\end{equation}
and
\begin{equation}
\label{bounds-lin-3}
\lim_{x \to 0^-} v_x(t,x) = e^{-t} v_0'(0^-) + \left(M v_0(0) - \pi m^2 \bar{v}\right) (1 - e^{-t}).
\end{equation}
Consequently, if $v_0 \in C^1_{\rm per}(\mathbb{T})$ with either $v_0(0) \neq \frac{m^2}{2M} \bar{v}$ or
$v_0'(0) \neq 0$ or both, then $v(t,\cdot) \notin C^1_{\rm per}(\mathbb{T})$ for almost every $t \in \mathbb{R}^+$.
\end{lemma}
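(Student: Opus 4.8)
The plan is to differentiate the Lagrangian representation of the solution in the label variable $s$ and then pass to the one-sided limits $s\to 0^{\pm}$. By Lemma \ref{lem-2} the solution is recovered from $V(t,s)=v(t,X(t,s))$ through the diffeomorphism $s\mapsto X(t,s)$, so the chain rule gives the pointwise identity
\[
v_x(t,X(t,s)) = \frac{V_s(t,s)}{X_s(t,s)}, \qquad X_s = e^{\int_0^t \varphi'(X(t',s))\,dt'} > 0,
\]
with $X_s$ as in (\ref{property-1}). Since $X(t,0)=0$ and $s\mapsto X(t,s)$ is increasing, the limit $x\to 0^{+}$ corresponds to $s\to 0^{+}$ and $x\to 0^{-}$ to $s\to 0^{-}$, and in either case $X(t,s)\to 0^{\pm}$ stays in the fundamental domain adjacent to the peak, so the native source functions apply.

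First I would obtain a closed scalar ODE for the slope $P(t,s):=V_s/X_s$. Differentiating (\ref{v-eq}) in $s$, using $\dot X_s=\varphi'(X)X_s$ together with the exact relation $W_s = v(t,X)\,X_s = V X_s$ (which follows from $w_x=v$, eliminating $W_s$), and dividing by $X_s$ yields
\[
\frac{dP}{dt} = \varphi'(X)\bigl(W - P\bigr) + \varphi(X)\,V - \pi m^2 \bar{v}\,\cosh(X).
\]
Next I would let $s\to 0^{\pm}$. By Lemma \ref{lem-1}, $V\to v(t,0)=v_0(0)$ and $\bar{v}$ is a genuine constant; moreover $W=w(t,X)\to w(t,0)=0$, $\varphi(X)\to M$, $\cosh(X)\to 1$, while $\varphi'(X)\to \varphi'(0^{\pm})=\mp 1$. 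Writing $P^{\pm}(t):=\lim_{s\to 0^{\pm}}P(t,s)$ and $C:=M v_0(0)-\pi m^2\bar{v}$, the limiting equations decouple into two linear ODEs $\frac{d}{dt}P^{\pm}=\pm P^{\pm}+C$ with data $P^{\pm}(0)=v_0'(0^{\pm})$. Integrating these gives exactly (\ref{bounds-lin-2}) and (\ref{bounds-lin-3}); the opposite signs of $\varphi'(0^{\pm})$ are precisely what produce the growing factor $e^{t}$ on the right and the decaying factor $e^{-t}$ on the left.

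For the final assertion, take $v_0\in C^1_{\rm per}(\mathbb{T})$, so that $v_0'(0^+)=v_0'(0^-)=v_0'(0)$. Subtracting (\ref{bounds-lin-3}) from (\ref{bounds-lin-2}) gives the jump of $v_x$ across the peak,
\[
[v_x]^+_- = 2 v_0'(0)\,\sinh t + 2 C\,(\cosh t - 1),
\]
and $v(t,\cdot)\in C^1_{\rm per}(\mathbb{T})$ if and only if this jump vanishes. The right-hand side is a real-analytic function of $t$, a linear combination of $\sinh t$, $\cosh t$, and $1$; by linear independence it is identically zero only when $v_0'(0)=0$ and $C=0$, i.e. $M v_0(0)=\pi m^2\bar{v}$. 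If either condition fails, the jump is a nontrivial analytic function, hence has only isolated zeros, so $v(t,\cdot)\notin C^1_{\rm per}(\mathbb{T})$ for almost every $t\in\mathbb{R}^+$.

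The main obstacle is the bookkeeping at the peak: one must check that along the one-sided approach the source term and the antiderivative $W$ take the correct limits (so that $\cosh(X)\to 1$ and $W\to 0$, rather than values carried over from the neighbouring peaks at $\pm 2\pi$), and that $W_s=V X_s$ may be used to close the slope equation without invoking (\ref{w-eq}) separately. Once this is settled, the remainder is an elementary integration of two decoupled linear ODEs, and the $W^{1,\infty}$ instability is read off directly from the sign asymmetry of $\varphi'(0^{\pm})$.
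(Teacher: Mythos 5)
Your proof is correct and follows essentially the same route as the paper: the slope along characteristics satisfies exactly the ODE (\ref{u-eq}) (your quotient derivation of $dP/dt$ with $W_s = V X_s$ reproduces it), and the one-sided limits at the peak reduce it to the same decoupled linear equations $\frac{d}{dt}P^{\pm} = \pm P^{\pm} + C$ with $C = M v_0(0) - \pi m^2 \bar{v}$; the only cosmetic difference is that the paper takes the left limit as $s \to 2\pi^-$ on the invariant interval $[0,2\pi]$, using $U(t,0^-) = U(t,2\pi^-)$ and the identity $w_0(2\pi) - \pi m^2 \bar{v} \cosh(2\pi) = -\pi m^2 \bar{v}$, whereas you take $s \to 0^-$ on the adjacent fundamental domain (equivalent by the periodic consistency you flag), and your analyticity/isolated-zeros argument additionally makes explicit the ``almost every $t$'' claim that the paper leaves implicit. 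One remark: your nondegeneracy condition $M v_0(0) \neq \pi m^2 \bar{v}$ is the one actually implied by (\ref{bounds-lin-2})--(\ref{bounds-lin-3}); the constant $\frac{m^2}{2M}\bar{v}$ in the lemma's statement is a slip, since $\frac{m^2}{2M}\int_{\mathbb{T}} v_0 \, dy = \frac{\pi m^2}{M}\bar{v}$, i.e. the stated threshold has $\bar{v}$ where $\int_{\mathbb{T}} v_0 \, dy$ was meant, and your reading is the consistent one.
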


\begin{proof}
Along each characteristic curve $x = X(t,s)$ satisfying (\ref{char}), function $U(t,s) := v_x(t,X(t,s))$ satisfies the initial-value problem:
\begin{equation}
\label{u-eq}
\left\{ \begin{array}{l}
\frac{dU}{dt} = \varphi'(X(t,s)) \left[ W(t,s) - U(t,s) \right] + \varphi(X(t,s)) V(t,s) - \pi m^2 \bar{v} \cosh(X(t,s)), \\
U |_{t=0} = v_0'(s). \end{array} \right.
\end{equation}
The initial-value problem (\ref{u-eq})  
is obtained by linearizing the nonlinear equation (\ref{u-appendix}) in Appendix A 
and posing it at the characteristic curve $x = X(t,s)$.
Coefficients of the differential equation for $U$ are all $C^1(0,2\pi)$. The limits $s \to 0^+$ and $s \to 2\pi^-$ yield
the differential equations
\begin{equation}
\label{dif-eq-1}
\frac{d}{dt} U(t,0^+) = U(t,0^+) + M v_0(0) - \pi m^2 \bar{v}
\end{equation}
and
\begin{equation}
\label{dif-eq-2}
\frac{d}{dt} U(t,2\pi^-) = -U(t,2\pi^-) + M v_0(0) - \pi m^2 \bar{v},
\end{equation}
where we have used  $V(t,0) = V(t,2\pi) = v_0(0)$, $w_0(2\pi) = 2 \pi \bar{v}$, and 
$$
w_0(2\pi) - \pi m^2 \bar{v} \cosh(2\pi) = -\pi m^2 \bar{v}.
$$
Since $U(t,0^-) = U(t,2\pi^-)$ due to periodic continuation of all coefficients of the differential equation (\ref{u-eq}) 
and its solution as $C^1(-2\pi,0)$, we obtain (\ref{bounds-lin-2}) and (\ref{bounds-lin-3}) by solving the linear differential equations
(\ref{dif-eq-1}) and (\ref{dif-eq-2}) with $U(0,0^+) = v_0'(0^+)$ and $U(0,2\pi^-) = U(0,0^-) = v_0'(0^-)$.
\end{proof}

The exponential growth of the $W^{1,\infty}$ norm in Lemma \ref{lem-4}
due to the exact solution (\ref{bounds-lin-2}) at $x = 0^+$ 
is equivalent to the linear instability
of the peaked perturbations in $W^{1,\infty}$. The following lemma
shows that the peaked perturbations are also linearly unstable in $H^1$.

\begin{lemma}
\label{lem-linear}
Assume that $v_0 \in C^1_0$ in the initial-value problem (\ref{linCH-equiv}).
Then, it follows that
\begin{equation}
\label{bound-1}
\| v(t,\cdot) \|_{H^1(\mathbb{T})}^2 = C_+ e^{t} + C_0 + C_- e^{-t},
\end{equation}
for some uniquely defined constants $C_+, C_0, C_-$.
\end{lemma}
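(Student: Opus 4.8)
The plan is to evaluate $g(t) := \|v(t,\cdot)\|_{H^1(\mathbb{T})}^2 = \int_{\mathbb{T}}(v^2 + v_x^2)\,dx$ directly from the explicit solution of Appendix B, after transporting the integral to the characteristic variables of Lemma \ref{lem-2}. Since $(0,2\pi)\ni s\mapsto X(t,s)\in(0,2\pi)$ is a diffeomorphism with Jacobian $J(t,s) := \partial_s X = \exp\!\big(\int_0^t\varphi'(X(t',s))\,dt'\big)$ by (\ref{property-1}), and writing $V(t,s) := v(t,X(t,s))$, $U(t,s) := v_x(t,X(t,s))$, I would express
\begin{equation*}
g(t) = \int_0^{2\pi}\big[V(t,s)^2 + U(t,s)^2\big]\,J(t,s)\,ds .
\end{equation*}
A convenient first observation is that, differentiating $\ln J$ and $\ln(M-\varphi(X))$ in $t$ along (\ref{char}), both have $t$-derivative $\varphi'(X)$, so that
\begin{equation*}
J(t,s) = \frac{M-\varphi(X(t,s))}{M-\varphi(s)},
\end{equation*}
which makes the measure $J\,ds$ completely explicit once $X(t,s)$ is known. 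I would then substitute the closed forms for $X$, for $V$ (the function in (\ref{v-sol})), and for $U$ (obtained by integrating (\ref{u-eq}) along the same characteristics), and reduce $g(t)$ to two definite integrals in $s$.

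The key structural point, which explains the three–term form, is that the whole $t$-dependence of the integrand enters only through $e^{-t}$ (as already visible in the function $Y(t,s)$ in the proof of Lemma \ref{lem-3}) together with the exponential prefactor carried by $J$. The growth is localised at the peak: as $X\to 0^+$ one has $M-\varphi(X)\sim X\sim e^{-t}$, so $J\sim e^{-t}$, while the slope $U$ grows like $e^{t}$ by (\ref{bounds-lin-2}); hence $U^2 J\sim e^{2t}e^{-t}=e^{t}$, which is the origin of the dominant term $C_+e^{t}$. The complementary contribution $\int_0^{2\pi}V^2 J\,ds=\int_{\mathbb{T}}v^2\,dx$ stays bounded by Lemma \ref{lem-3} and, together with the decaying slope on the left side of the peak in (\ref{bounds-lin-3}), accounts for the lower–order part $C_0+C_-e^{-t}$. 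Carrying out the $s$-integration of the explicit rational-in-$e^{-t}$ integrands should yield the finite combination $C_+e^{t}+C_0+C_-e^{-t}$, with the three constants uniquely determined by $v_0$.

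The main obstacle is to confirm that the evaluation produces exactly these three exponentials, and in particular that no $e^{\pm 2t}$ term survives. Pointwise near the peak $U^2\sim e^{2t}$, and the reduction to $e^{t}$ happens only after integrating against the shrinking measure $J\,ds\sim e^{-t}\,ds$; I therefore expect the delicate step to be performing the $s$-integration exactly, rather than estimating it as in Lemma \ref{lem-3}, so as to track the cancellation of the would-be $e^{2t}$ and $e^{-2t}$ contributions. To pin down $C_0$ cleanly and to exhibit the announced link with the energy, I would pair the computation with the algebraic identity
\begin{equation*}
\|v\|_{H^1(\mathbb{T})}^2 = E(\varphi+v) - E(\varphi) - 4\,v(0),
\end{equation*}
valid for every $v\in H^1_{\rm per}(\mathbb{T})$, which follows from $E$ in (\ref{conserved-quantities}) by integrating $\int_{\mathbb{T}}\varphi' v_x\,dx$ by parts using $\varphi''=\varphi$ on $\mathbb{T}\setminus\{0\}$ and the jump $\varphi'(0^{\pm})=\mp1$. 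Since $v(t,0)=v_0(0)$ is conserved along the linearized flow by Lemma \ref{lem-1}, $g(t)$ and $E(\varphi+v(t))$ differ only by a constant, so the growth of $g$ is exactly the growth of the energy functional $E$ along the linearized flow. In the full nonlinear dynamics $E$ is conserved, which is what keeps $g$ bounded in Theorem \ref{theorem-CS}; the linearization destroys this conservation and is precisely what allows the $e^{t}$ mode to appear. This is the sense in which the $H^1$ instability is governed by the conservation laws (\ref{conserved-quantities}).
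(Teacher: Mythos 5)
Your strategy---transporting the $H^1$ integral to characteristic variables and evaluating it from the explicit Appendix~B solution---is genuinely different from the paper's, and several of your ingredients check out: the Jacobian identity $J(t,s)=\bigl(M-\varphi(X(t,s))\bigr)/\bigl(M-\varphi(s)\bigr)$ is correct, the algebraic identity $E(v)=E(\varphi+v)-E(\varphi)-4v(0)$ is exactly the expansion the paper records in its closing remark, and the near-peak scaling $U^2 J\sim e^{2t}\,e^{-t}=e^{t}$ correctly identifies the source of the growing mode. But there is a genuine gap at precisely the point you flag and then defer: the lemma asserts an \emph{exact identity}, valid for all $t\geq 0$, all $v_0\in C^1_0$, with uniquely defined constants, and your argument only renders it plausible. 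The integrand $[V^2+U^2]J$ is $e^{t}$ times a rational function of $e^{-t}$ whose coefficients and whose poles in $e^{-t}$ depend on $s$ and on the arbitrary data $v_0$, $w_0$ (through $Y(t,s)$, $Y(t,s)^2$, and $U=V_s/X_s$); integrating such an expression in $s$ does not generically produce a polynomial in $e^{\pm t}$, and the three-term form (\ref{bound-1}) rests on cancellations that are global in $s$, for which your plan supplies no mechanism. The pointwise large-$t$ scaling can at best identify the leading growth rate; it cannot deliver equality for all $t$, nor exclude, say, $t e^{-t}$ contributions in the subleading part. Your closing observation does not close this gap either: $E(\varphi+v(t))$ is \emph{not} conserved by the linearized flow, and the paper's remark shows that the quantity actually conserved by the linearization, $cE(v)-2P(t)$ in (\ref{eq-conservation}), arises only from combining the quadratic truncations of \emph{both} invariants $E$ and $F$ in (\ref{conserved-quantities}); $E$ alone does not produce it.

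The paper's proof bypasses the explicit solution entirely. Multiplying the evolution equations (\ref{v-evol}) and (\ref{u-evol}) by $v,v_x$, then by $\varphi v,\varphi v_x$, then by $\varphi' v,\varphi' v_x$, and integrating on $[0,2\pi]$ (using $(\varphi')^2=\varphi^2-m^2$, $m^2-M^2=-1$, and Lemma \ref{lem-1} for the constancy of $v|_{x=0}$ and $\bar v$), one obtains the closed linear system $\frac{d}{dt}E(v)=-2S(t)$, $P'(t)=-MS(t)$, $S'(t)=-M^{-1}P(t)+C_3$ for the weighted quantities $P,S$ of (\ref{P-S-def}); hence $S''=S$ exactly, and (\ref{bound-1}) follows from $cE(v)=2P(t)+C_1$. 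This is where the exact three-exponential structure comes from, and it is the hidden structure your cancellation would have to reproduce. Indeed, the natural way to complete your route---differentiate your parametric integrals in $t$ and look for closed relations---simply rediscovers these weighted identities, so the honest completion of your computation is the paper's argument.
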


\begin{proof}
Since $v(t,\cdot) \in C^1_0$ for every $t \in \mathbb{R}^+$ is the unique solution to the initial-value
problem (\ref{linCH-equiv}) by Lemma \ref{lem-2}, the $H^1$ norm is independent on the choice of the fundamental interval.
Hence we proceed with apriori energy estimates on $[0,2\pi]$ instead of $\mathbb{T}$.
We write the evolution equation for $v$ and $v_x$ explicitly:
\begin{equation}
\label{v-evol}
v_t = (c - \varphi) v_x + \varphi w - \pi m^2 \bar{v} \sinh(x)
\end{equation}
and
\begin{equation}
\label{u-evol}
v_{tx} = (c - \varphi) v_{xx} + \varphi' (w - v_x) + \varphi v - \pi m^2 \bar{v} \cosh(x).
\end{equation}
By multiplying (\ref{v-evol}) by $v$ and (\ref{u-evol}) by $v_x$,
integrating on $[0,2\pi]$, and using integration by parts for $v(t,\cdot) \in C^1_0$, we obtain
\begin{eqnarray*}
\frac{1}{2} \frac{d}{dt} \| v(t,\cdot) \|^2_{L^2(0,2\pi)} & = &
\int_0^{2\pi} \left[ (c - \varphi) v v_x + \varphi v \int_0^x v dy - \pi m^2 \bar{v} \sinh(x) v \right] dx \\
& = & \int_0^{2\pi} \left[ \frac{1}{2} \varphi' v^2 + \varphi v \int_0^x v dy - \pi m^2 \bar{v} \sinh(x) v \right] dx
\end{eqnarray*}
and
\begin{eqnarray*}
\frac{1}{2} \frac{d}{dt} \| v_x(t,\cdot) \|^2_{L^2(0,2\pi)} & = &
\int_0^{2\pi} \left[ (c - \varphi) v_x v_{xx} + \varphi' v_x \int_0^x v dy - \varphi' v_x^2 + \varphi v v_x
- \pi m^2 \bar{v} \cosh(x) v_x \right] dx \\
& = & \int_0^{2\pi} \left[ - \frac{1}{2} \varphi' v_x^2 - \frac{3}{2} \varphi' v^2
- \varphi v \int_0^x v dy  + \pi m^2 \bar{v} \sinh(x) v \right] dx,
\end{eqnarray*}
where we have used $c = \varphi(0) = \varphi(2\pi)$, $\varphi'(2\pi^-) = 1$, and $v(t,0) = v(t,2\pi)$.
Adding these two lines together yields
\begin{equation}
\label{eq-1}
\frac{d}{dt} E(v) = -2 \int_0^{2\pi} \varphi' \left[ v^2 + \frac{1}{2} v_x^2 \right] dx,
\end{equation}
where $E(v) := \| v(t,\cdot) \|^2_{L^2(0,2\pi)} + \| v_x(t,\cdot) \|^2_{L^2(0,2\pi)}$ in agreement with (\ref{conserved-quantities}).

Next, we multiply (\ref{v-evol}) by $\varphi v$ and (\ref{u-evol}) by $\varphi v_x$,
integrate on $[0,2\pi]$, and use integration by parts for $v(t,\cdot) \in C^1_0$.
After straightforward computations, we obtain
\begin{eqnarray*}
\frac{1}{2} \frac{d}{dt} \int_0^{2\pi} \varphi v^2 dx & = &
\int_0^{2\pi} \left[ -\frac{1}{2} (c - 2 \varphi) \varphi' v^2 + \varphi^2 v \int_0^x v dy - \pi m^2 \bar{v} \varphi \sinh(x) v \right] dx
\end{eqnarray*}
and
\begin{eqnarray*}
\frac{1}{2} \frac{d}{dt} \int_0^{2\pi} \varphi v_x^2 dx
& = & \int_0^{2\pi} \left[ - \frac{1}{2} c \varphi' v_x^2 - 2 \varphi \varphi' v^2
- (\varphi^2 + (\varphi')^2) v \int_0^x v dy  \right] dx \\
& \phantom{t} & + \pi m^2 \bar{v} \int_0^{2\pi} \left[ \varphi \sinh(x)
+ \varphi' \cosh(x) \right] v dx,
\end{eqnarray*}
where we can use $(\varphi')^2 = \varphi^2 - m^2$ and 
$\varphi'(x) \cosh(x) - \varphi(x) \sinh(x)  = -1$.
Adding a linear combination of these two lines multiplied by $2$ and $1$ respectively yields
\begin{eqnarray}
\frac{d}{dt} \int_0^{2\pi} \varphi \left[ v^2 + \frac{1}{2} v_x^2 \right] dx = 
- c \int_0^{2\pi} \varphi' \left[ v^2 + \frac{1}{2} v_x^2 \right] dx,
\label{eq-2}
\end{eqnarray}
where we have used that 
$$
\int_0^{2\pi} \left[ v \int_0^x v dy - \frac{1}{2} v \int_0^{2\pi} v dy \right] dx = 0.
$$

Let us define
\begin{equation}
\label{P-S-def}
P(t) := \int_0^{2\pi} \varphi \left[ v^2 + \frac{1}{2} v_x^2 \right] dx, \quad S(t) := \int_0^{2\pi} \varphi' \left[ v^2 + \frac{1}{2} v_x^2 \right] dx.
\end{equation}
It follows from (\ref{eq-1}) and (\ref{eq-2}) that
\begin{equation}
\label{eq-conservation}
c E(v) = 2 P(t) + C_1,
\end{equation}
where $C_1$ is arbitrary constant.

Finally, we multiply (\ref{v-evol}) by $\varphi' v$ and (\ref{u-evol}) by $\varphi' v_x$,
integrate on $[0,2\pi]$, and use integration by parts for $v(t,\cdot) \in C^1_0$.
After straightforward computations, we obtain
\begin{eqnarray*}
\frac{1}{2} \frac{d}{dt} \int_0^{2\pi} \varphi' v^2 dx & = &
\int_0^{2\pi} \left[ -\frac{1}{2} (c \varphi - \varphi^2 - (\varphi')^2) v^2 +
\varphi \varphi' v \int_0^x v dy - \pi m^2 \bar{v} \varphi' \sinh(x) v \right] dx
\end{eqnarray*}
and
\begin{eqnarray*}
\frac{1}{2} \frac{d}{dt} \int_0^{2\pi} \varphi' v_x^2 dx
& = & \int_0^{2\pi} \left[ - \frac{1}{2} (c \varphi - \varphi^2 + (\varphi')^2) v_x^2
- \frac{1}{2} (\varphi^2 + 3 (\varphi')^2) v^2
- 2 \varphi \varphi' v \int_0^x v dy  \right] dx \\
& \phantom{t} & +\pi m^2 \bar{v} \int_0^{2\pi} \left[ \varphi' \sinh(x)
+ \varphi  \cosh(x) \right] v dx + M (v|_{x = 0})^2 - 2 \pi m^2 \bar{v} v|_{x = 0},
\end{eqnarray*}
where we can use $\varphi(x) \cosh(x) -  \varphi'(x) \sinh(x)  = M$. 
Adding a linear combination of these two lines multiplied by $2$ and $1$ respectively yields
\begin{eqnarray}
\label{eq-3}
\frac{d}{dt} S(t) = - c P(t) + \frac{1}{2} m^2 E(v) + C_2,
\end{eqnarray}
where $C_2$ is a constant defined by
$$
C_2 := \pi m^2 M \bar{v} + M (v|_{x = 0})^2 - 2 \pi m^2 \bar{v} v|_{x = 0},
$$
since $v|_{x=0}$ and $\bar{v}$ are independent of $t$.
Equations (\ref{eq-2}) and (\ref{eq-3}) with conservation (\ref{eq-conservation}) yields
the system of differential equations
\begin{equation}
\label{eq-4}
P'(t) = - M S(t), \quad S'(t) = -M^{-1} P(t) + C_3,
\end{equation}
where $C_3 := \frac{m^2 C_1}{2 M} + C_2$ and we have used that $m^2  - M^2 = -1$.
Thus, $S''(t) = S(t)$, so that the most general solution
of the system (\ref{eq-4}) is given by
\begin{equation}
\label{eq-solution}
P(t) = -M S_+ e^t + M S_- e^{-t} + M C_3, \quad S(t) = S_+ e^t + S_- e^{-t},
\end{equation}
where $S_+$ and $S_-$ are arbitrary constants. Substituting (\ref{eq-solution}) into (\ref{eq-conservation})
yields (\ref{bound-1}) for some constants $C_+$, $C_0$, and $C_-$.
\end{proof}

We end this section with two remarks. 
First we show that the conserved quantity (\ref{eq-conservation}) for the linearized evolution 
is obtained from the conserved quantities (\ref{conserved-quantities}) after substituing the solution in the form (\ref{decomp}) 
and expanding in powers of the perturbation. By using (\ref{Green}) and integrating by parts, we obtain
\begin{eqnarray*}
E(u) = E(\varphi) + 2 \int_{\mathbb{T}} (\varphi v + \varphi' v_x) dx + E(v) = E(\varphi) + 4 v|_{x=0} + E(v)
\end{eqnarray*}
and
\begin{eqnarray*}
F(u) & = & F(\varphi) + 2 \int_{\mathbb{T}} \varphi (\varphi v + \varphi' v_x) dx 
+ \int_{\mathbb{T}} v [\varphi^2 + (\varphi')^2] dx + 2 \int_{\mathbb{T}} v (\varphi v + \varphi' v_x) dx \\
&& \qquad \qquad \qquad \qquad  + \int_{\mathbb{T}} \varphi [v^2 + (v_x)^2] dx + F(v) \\
& = & F(\varphi) + 4 M (v|_{x=0}) 
+ 2 \pi m^2 \bar{v}+ 2 P(t) + 2 (v|_{x=0})^2 + F(v),
\end{eqnarray*}
where $P(t)$ is defined by (\ref{P-S-def}). By eliminating $v|_{x=0}$ from the first equality and substituting it to the second equality, we obtain
\begin{equation}
\label{P-E-conservation}
2 P(t) - M E(v) - \frac{1}{4} [E(u) - E(\varphi)] E(v) + \frac{1}{8} [E(v)]^2 + F(v) = C,
\end{equation}
where 
$$
C := F(u) - F(\varphi) - 2 \pi m^2 \bar{v} - M [E(u) - E(\varphi)] - \frac{1}{8} [E(u) - E(\varphi)]^2
$$ 
is constant in the time evolution due to the energy conservation. Neglecting the cubic and quartic terms of $v$ in (\ref{P-E-conservation}) recovers the conserved quantity  (\ref{eq-conservation}) of the linearized evolution since $c = M$.

Finally, we illustrate the exact solutions to the initial-value problems (\ref{char}), (\ref{w-eq}), and (\ref{v-eq}) obtained in Appendix B. 
The solution $v = v(t,x)$ is available in the parametric form 
(\ref{char-sol}) and (\ref{v-sol})  
due to the method of characteristics with parameter $s$ on $[0,2\pi]$. The solution is extended 
periodically to $[-2\pi,0]$. 
Figure \ref{wave-cos} shows the plots of $v(t,x)$ versus $x$ on $[-2\pi,2\pi]$  for different values of $t$ for two initial conditions: $v_0(x) = \sin(x)$ (top) and $v_0(x) = \cos(x)$ (bottom), in both cases, $\bar{v} = 0$. 
These panels give clear illustrations that $v(t,\cdot)$ remains bounded in the $L^{\infty}$ norm (Lemma \ref{lem-3}) and that the slope of the perturbation grows at the right side of the peak (Lemma \ref{lem-4}).

\begin{figure}[htb!]
	\includegraphics[width=0.7\textwidth]{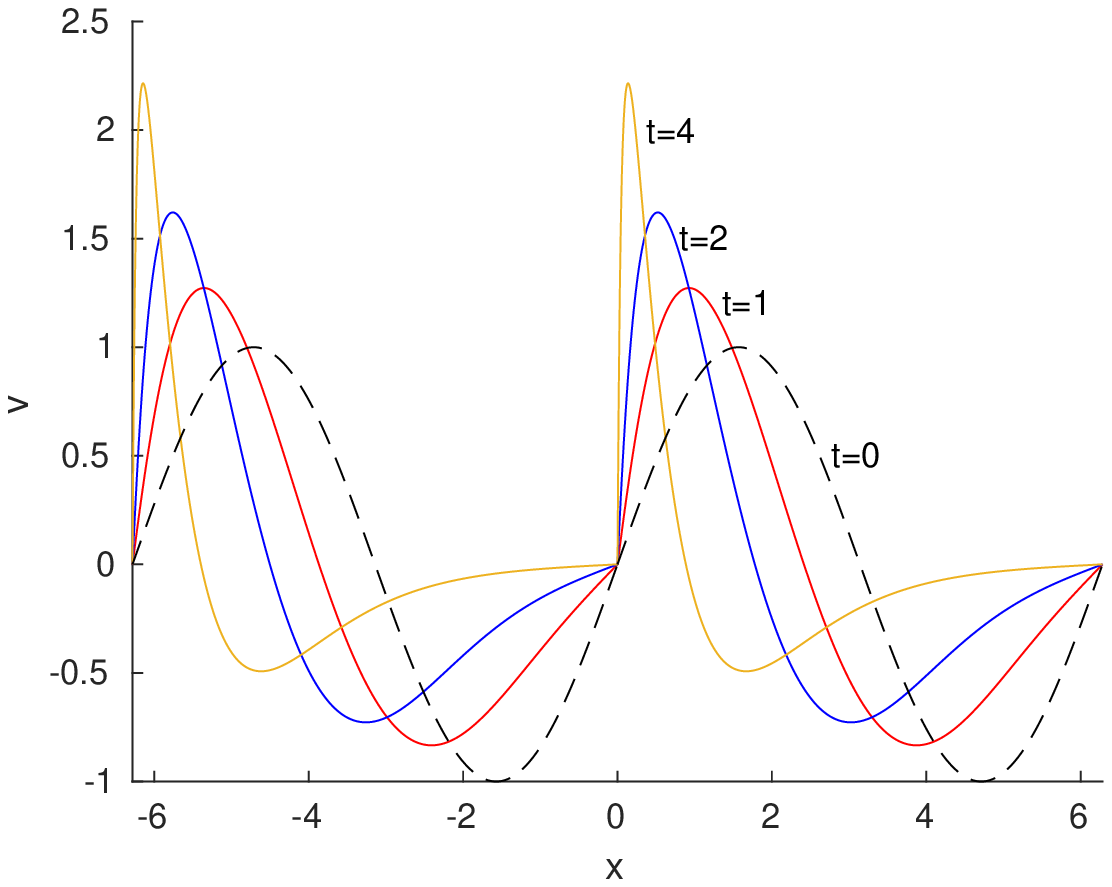}
	\includegraphics[width=0.7\textwidth]{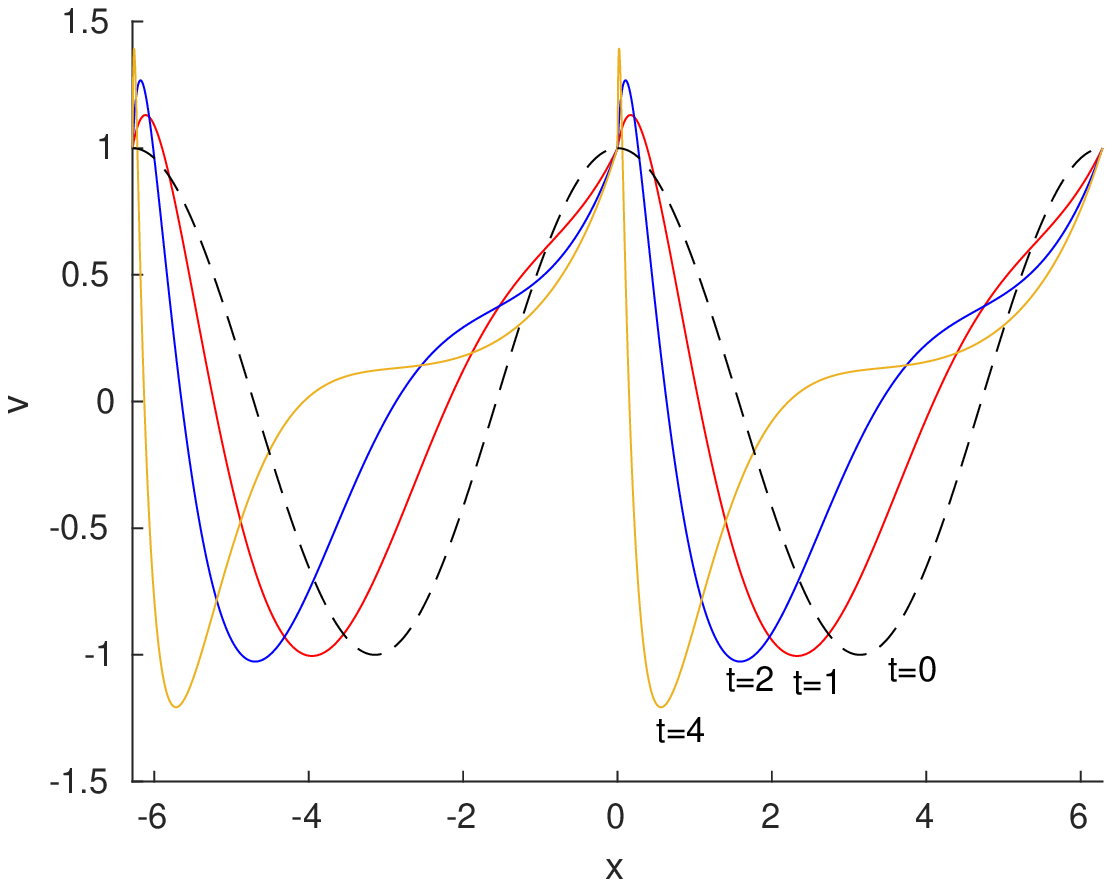}
	\caption{The plots of $v(t,x)$ versus $x$ on $[-2\pi,2\pi]$ for different values of $t$ in the case $v_0(x) = \sin(x)$ (top) and 
		$v_0(x) = \cos(x)$ (bottom). } \label{wave-cos}
\end{figure}

\section{Nonlinear evolution}

Here we analyze the initial-value problem (\ref{CHpert}) with $v_0 \in C^1_0$ and prove Theorem \ref{theorem-nonlinear}. The challenge is that the local well-posedness in $C^1_0$ has not been established in the periodic domain $\mathbb{T}$. Neverthless, we will get the local well-posedness result by using the method of characteristics and the ODE theory.

By simplifying the linear terms with Lemma \ref{lem-0}, 
we rewrite the initial-value problem (\ref{CHpert}) in the equivalent form:
\begin{equation}
\label{v-nonlinear}
\left\{
\begin{array}{l}
v_t = (c - \varphi) v_x + \varphi w - \pi m^2 \bar{v} \sinh(x) 
+ (v|_{x=0} - v) v_x - Q[v],\\
v |_{t = 0} = v_0,
\end{array} \right.
\end{equation}
where $w$ and $\bar{v}$ are defined by the same expressions as in (\ref{w-definition}) and 
\begin{equation}
\label{Q-def-again}
Q[v](x) := \frac{1}{2} \int_{\mathbb{T}} \varphi'(x-y) q[v](y) dy, \quad
q[v] := v^2 + \frac{1}{2} v_x^2, \quad x \in \mathbb{T}.
\end{equation}
As is derived in Appendix A, $w$ satisfies the initial-value problem:
\begin{equation}
\label{w-nonlinear}
\left\{
\begin{array}{l}
w_t = (c - \varphi) w_x + \varphi' w - \pi m^2 \bar{v} [\cosh(x) - 1] 
- \frac{1}{2} (v|_{x=0} - v)^2 - P[v] + P[v] |_{x=0},\\
w |_{t = 0} = w_0,
\end{array} \right.
\end{equation}
where $w_0(x) := \int_0^x v_0(y) dy$ and 
\begin{equation}
\label{P-def}
P[v](x) := \frac{1}{2} \int_{\mathbb{T}} \varphi(x-y) q[v](y) dy, \quad 
q[v] := v^2 + \frac{1}{2} v_x^2, \quad x \in \mathbb{T}.
\end{equation}

Similarly to Lemma \ref{lem-1}, we prove that $\bar{v}$ is independent of $t$. 
At the same time, $v|_{x=0}$ does depend on $t$ in the nonlinear evolution. 

\begin{lemma}
	\label{lem-nonlinear-0}
	Assume that there exists a solution $v \in C([0,T),C^1_0)$ to the initial-value problem (\ref{v-nonlinear})
	Then, $\bar{v}(t) = \bar{v}_0$ for every $t \in [0,T)$.
\end{lemma}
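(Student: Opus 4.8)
The plan is to mimic the proof of Lemma \ref{lem-1}: integrate the evolution equation (\ref{v-nonlinear}) over $\mathbb{T}$ and show that every term on the right-hand side integrates to zero, so that $\frac{d}{dt}\bar{v}(t) = 0$. Since the assumption $v \in C([0,T),C^1_0)$ guarantees that $v_t$ is integrable on $\mathbb{T}$ with a possible jump of $v_x$ only at the peak $x = 0$, differentiation under the integral sign is justified by working piecewise on $[-\pi,0]$ and $[0,\pi]$, and I would start from
\begin{equation*}
\frac{d}{dt}\bar{v}(t) = \frac{1}{2\pi}\int_{\mathbb{T}} v_t\, dx = \frac{1}{2\pi}\int_{\mathbb{T}}\left[(c-\varphi)v_x + \varphi w - \pi m^2 \bar{v}\sinh(x)\right] dx + \frac{1}{2\pi}\int_{\mathbb{T}}\left[(v|_{x=0}-v)v_x - Q[v]\right] dx.
\end{equation*}
The first integral is precisely the integral of the right-hand side of the linearized problem (\ref{linCH-equiv}), and it vanishes by the integration-by-parts computation already carried out in the proof of Lemma \ref{lem-1} (using $\varphi(\pi)=\varphi(-\pi)$, $\varphi'(\pm\pi)=0$, continuity and periodicity of $v$, and $\int_{\mathbb{T}}\sinh(x)\,dx=0$). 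It therefore remains to treat the two nonlinear contributions.

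For the convective term I would split $\int_{\mathbb{T}}(v|_{x=0}-v)v_x\,dx = v|_{x=0}\int_{\mathbb{T}} v_x\,dx - \int_{\mathbb{T}} v v_x\,dx$, treating $v|_{x=0}$ as constant in $x$. Integrating each piece on $[-\pi,0]$ and $[0,\pi]$ and using that $v(t,\cdot)\in C^0_{\rm per}(\mathbb{T})$ is continuous across $x=0$ and periodic across $x=\pm\pi$, both $\int_{\mathbb{T}} v_x\,dx$ and $\int_{\mathbb{T}} v v_x\,dx = \frac{1}{2}\int_{\mathbb{T}}(v^2)_x\,dx$ vanish. For the nonlocal term I would apply Fubini to the definition (\ref{Q-def-again}) and write $\int_{\mathbb{T}} Q[v]\,dx = \frac{1}{2}\int_{\mathbb{T}} q[v](y)\left(\int_{\mathbb{T}}\varphi'(x-y)\,dx\right) dy$; the inner integral equals $\int_{\mathbb{T}}\varphi'(z)\,dz$ by periodicity of $\varphi'$, and this is zero because $\varphi$ is continuous and periodic (the decaying slopes on either side of the peak exactly offset the jump of $\varphi'$ at $x=0$). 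Here $q[v]\in L^1(\mathbb{T})$ since $v(t,\cdot)\in H^1_{\rm per}(\mathbb{T})\cap W^{1,\infty}(\mathbb{T})$, so $Q[v]$ is well-defined and continuous by Lemma \ref{lem-nonlinear-1}. Combining the three pieces gives $\frac{d}{dt}\bar{v}(t)=0$ on $(0,T)$, and continuity in $t$ then yields $\bar{v}(t)=\bar{v}_0$ for all $t\in[0,T)$.

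The computation is essentially routine once the Lemma \ref{lem-1} reduction of the linear part is invoked; I expect the only genuinely delicate point to be the bookkeeping at the peak $x=0$, where $v_x$ jumps. Specifically, I would be careful that the integrals of $v_x$ and of $(v^2)_x$ are computed piecewise so that no spurious boundary contribution from the peak survives — the jump of $v$ itself is zero by continuity, so these piecewise integrals telescope correctly, and the same care confirms $\int_{\mathbb{T}}\varphi'\,dz=0$ despite the jump of $\varphi'$. An alternative, conceptually cleaner route would be to note that the CH equation (\ref{CH}) conserves the mean $\int_{\mathbb{T}} u\,dx$ and that $u=\varphi+v$ differs from $v$ by the fixed constant $\int_{\mathbb{T}}\varphi\,dx$, but the direct integration above stays within the framework of (\ref{v-nonlinear}) and parallels Lemma \ref{lem-1}.
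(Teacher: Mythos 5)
Your proposal is correct and takes essentially the same route as the paper: integrate the evolution equation over $\mathbb{T}$, cancel the linear terms exactly as in Lemma \ref{lem-1}, and eliminate the nonlocal term by Fubini's theorem together with periodicity of $\varphi$ (the paper evaluates the inner integral piecewise as $\varphi(\pi-y)-\varphi(-\pi-y)=0$, matching your $\int_{\mathbb{T}}\varphi'(z)\,dz=0$). The only difference is cosmetic: you spell out the vanishing of $\int_{\mathbb{T}}(v|_{x=0}-v)v_x\,dx$, which the paper leaves implicit in its reference to the linear-term cancelation.
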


\begin{proof}
	We integrate the evolution equation in system (\ref{v-nonlinear}) for the solution $v \in C([0,T),C^1_0)$ in $x$ on $\mathbb{T}$ and use cancelation of the linear terms in $v$ as in the proof of Lemma \ref{lem-1}. Then, it is true 
	that $\frac{d}{dt} \bar{v}(t) = 0$ if and only if 
\begin{equation}
\label{zero-mean-Q}
	\int_{\mathbb{T}} Q[v](t,x) dx = \frac{1}{2} 
	\int_{\mathbb{T}} \int_{\mathbb{T}} \varphi'(x-y) q[v](t,y) dy dx = 0,
\end{equation}
	where both $\varphi'$ and $q[v]$ are absolutely integrable. Interchanging the integrations by Fubini's theorem and integrating $\varphi'$ piecewise on both sides of the peak yields for every $t \in [0,T)$:
	\begin{eqnarray*}
\int_{\mathbb{T}} \int_{\mathbb{T}} \varphi'(x-y) q[v](t,y) dy dx 
& = & \int_{\mathbb{T}} q[v](t,y) \left(\int_{\mathbb{T}} \varphi'(x-y) dx \right) dy \\
& = & \int_{\mathbb{T}} q[v](t,y) \left( \varphi(\pi - y) - \varphi(-\pi - y) \right) dy \\
& = & 0,
	\end{eqnarray*}
	due to periodicity of $\varphi \in C^0_{\rm per}(\mathbb{T})$. 
	Hence, $\bar{v}(t) = \bar{v_0}$ for every $t \in [0,T)$.
\end{proof}

We can now develop local well-posedness theory of the initial-value 
problem (\ref{v-nonlinear}) by means of the 
method of characteristics. The family of characteristic curves $x = X(t,s)$
satisfies the initial-value problem:
\begin{equation}
\label{char-X}
\left\{
\begin{array}{l}
\frac{dX}{dt} = \varphi(X) - \varphi(0) + v(t,X) - v(t,0), \\
X |_{t = 0} = s,
\end{array} \right.
\end{equation}
for every $s \in \mathbb{T}$. Assuming that $v(t,\cdot) \in C^1_0$ for every $t \in [0,T)$, we can differentiate (\ref{char-X}) piecewise in $s \in \mathbb{T} \backslash \{0\}$ and obtain
\begin{equation}
\label{char-X-der}
\left\{
\begin{array}{l}
\frac{d}{dt} \frac{\partial X}{\partial s} = 
\left[ \varphi'(X) + v_x(t,X) \right]  \frac{\partial X}{\partial s}, \\
 \frac{\partial X}{\partial s} |_{t = 0} = 1.
\end{array} \right.
\end{equation}
with the exact solution
\begin{equation}
\label{char-X-der-solution}
 \frac{\partial X}{\partial s}(t,s) = \exp\left(\int_0^t \left[ \varphi'(X(t',s)) + v_x(t,X(t',s)) \right] dt' \right).
\end{equation}
The peak's locations at $X(t,0) = 0$ and $X(t,2\pi) = 2\pi$ are invariant 
in the time evolution if $v(t,\cdot) \in C^1_0$ for every $t \in [0,T)$.

Along each characteristic curve $x = X(t,s)$ satisfying (\ref{char-X}), functions 
$V(t,s) := v(t,X(t,s))$, $W(t,s) = w(t,X(t,s))$, and $U(t,s) = v_x(t,X(t,s))$ satisfy the following initial-value problems:
\begin{equation}
\label{char-V}
\left\{
\begin{array}{l}
\frac{dV}{dt} = \varphi(X) W - \pi m^2 \bar{v} \sinh(X) - Q[v](X), \\
V |_{t = 0} = v_0(s),
\end{array} \right.
\end{equation}
\begin{equation}
\label{char-W}
\left\{
\begin{array}{l}
\frac{dW}{dt} = \varphi'(X) W 
- \pi m^2 \bar{v} [ \cosh(X) - 1] 
+ \frac{1}{2} [ V^2 - (v|_{X=0})^2 ] - P[v](X) + P[v](0), \\
W |_{t = 0} = w_0(s),
\end{array} \right.
\end{equation}
and
\begin{equation}
\label{char-U}
\left\{
\begin{array}{l}
\frac{dU}{dt} = \varphi'(X) [W - U] + \varphi(X) V - \pi m^2 \bar{v} \cosh(X)
- \frac{1}{2} U^2 + V^2 - P[v](X), \\
U |_{t = 0} = v_0'(s).
\end{array} \right.
\end{equation}
Derivation of (\ref{char-V}), (\ref{char-W}), and (\ref{char-U})  follow 
from (\ref{v-appendix}), (\ref{w-appendix}), and (\ref{u-appendix}) in Appendix A 
after using (\ref{char-X}). The following lemma transfers the local well-posedness theory for differential equations to the initial-value problem (\ref{v-nonlinear}).

\begin{lemma}
\label{lem-nonlinear-3}
For every $v_0 \in C^1_0$, there exists the maximal existence time $T > 0$ (finite or infinite) and the unique solution $v \in C^1([0,T),C^1_0)$ to the initial-value problem (\ref{v-nonlinear}) that depends continuously on $v_0 \in C^1_0$.
\end{lemma}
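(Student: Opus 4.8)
The plan is to recast the initial-value problem (\ref{v-nonlinear}) as a closed integro-differential system for the tuple $(X,J,V,W,U)$ along characteristics, where $J:=\partial X/\partial s$ is adjoined as an auxiliary unknown governed by (\ref{char-X-der}), and then to solve that system by a contraction-mapping argument, reconstructing $v$ afterwards by inverting $s\mapsto X(t,s)$. The first step is to express the nonlocal terms purely in terms of the unknowns. Using the change of variables $y=X(t,\sigma)$, $dy=J(t,\sigma)\,d\sigma$ and periodicity to replace $\int_{\mathbb{T}}$ by $\int_0^{2\pi}$, the functionals in (\ref{Q-def-again}) and (\ref{P-def}), evaluated along $x=X(t,s)$, become
\begin{equation*}
Q[v](t,X(t,s)) = \frac{1}{2}\int_0^{2\pi} \varphi'\bigl(X(t,s)-X(t,\sigma)\bigr)\left[V^2 + \tfrac{1}{2}U^2\right](t,\sigma)\, J(t,\sigma)\, d\sigma,
\end{equation*}
and likewise for $P[v]$ with $\varphi'$ replaced by $\varphi$. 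Since $v|_{x=0}=V(t,0)$ and, by Lemma \ref{lem-nonlinear-0}, $\bar v=\bar v_0$ is a fixed constant, the equations (\ref{char-V})--(\ref{char-U}) together with (\ref{char-X}) and (\ref{char-X-der}) form a self-contained system $\tfrac{d}{dt}(X,J,V,W,U)=\mathbf{F}[X,J,V,W,U]$ whose right-hand side is a local polynomial in $(J,V,W,U)$ with smooth $X$-dependent coefficients, plus the two nonlocal integral terms above.

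The second step is the fixed-point argument. I would fix $R>\|(v_0,w_0,v_0')\|_{L^\infty}+1$ and work in the closed ball of radius $R$ in the Banach space $\mathcal{B}_\tau:=C\bigl([0,\tau];C([0,2\pi];\mathbb{R}^5)\bigr)$ with the supremum norm, for a small $\tau>0$ to be chosen. The solution is sought as a fixed point of the integral map
\begin{equation*}
(X,J,V,W,U)(t,\cdot) \longmapsto \bigl(s,\,1,\,v_0,\,w_0,\,v_0'\bigr) + \int_0^t \mathbf{F}[X,J,V,W,U](t',\cdot)\,dt'.
\end{equation*}
The local polynomial terms are smooth and hence Lipschitz on the ball; the kernels $\varphi,\varphi'$ are bounded on $\mathbb{T}$ and $[0,2\pi]$ is compact, so the nonlocal integrals define maps $C([0,2\pi])\to C([0,2\pi])$ that are locally Lipschitz in $(X,J,V,U)$ in the supremum norm. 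A standard estimate then shows the integral map sends the ball into itself and is a contraction once $\tau$ is small, yielding a unique fixed point $(X,J,V,W,U)\in\mathcal{B}_\tau$, which is moreover $C^1$ in $t$ because $\mathbf{F}$ is continuous.

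The third step is to reconstruct $v$ and verify $v(t,\cdot)\in C^1_0$. The constraints $X(t,0)=0$, $X(t,2\pi)=2\pi$, $W(t,0)=0$, $W(t,2\pi)=2\pi\bar v_0$, and $V(t,0)=V(t,2\pi)$ propagate by the flow exactly as in the proof of Lemma \ref{lem-2}. For $\tau$ small the exponent in (\ref{char-X-der-solution}) is close to $0$, so $J=\partial X/\partial s>0$ and $s\mapsto X(t,s)$ is a $C^1$ increasing bijection of $[0,2\pi]$; one then sets $v(t,x):=V(t,X^{-1}(t,x))$ with $v_x(t,x)=U(t,X^{-1}(t,x))$, which lie in $C(\mathbb{T}\backslash\{0\})$ and, being bounded on the compact $\mathbb{T}$, give $v(t,\cdot)\in H^1_{\rm per}(\mathbb{T})\cap W^{1,\infty}(\mathbb{T})$, i.e. $v(t,\cdot)\in C^1_0$. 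Continuous dependence on $v_0\in C^1_0$ follows from the Lipschitz dependence of the fixed point on the data (equivalently, a Gronwall estimate for the difference of two solutions), and the local solution is continued to a maximal interval $[0,T)$ by the standard argument: it persists as long as $\|U(t,\cdot)\|_{L^\infty}$ stays finite and $J$ stays bounded away from $0$, so $T$ is finite or infinite.

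The main obstacle is the nonlocal coupling. Unlike a pointwise ODE along each characteristic, the right-hand side at parameter $s$ depends on the entire profile through the integrals defining $Q[v]$ and $P[v]$, so the problem is genuinely a system in the Banach space $C([0,2\pi];\mathbb{R}^5)$ rather than a family of decoupled scalar ODEs. Making the contraction work therefore hinges on the local Lipschitz bounds for these integral functionals in the supremum norm, carrying the Jacobian weight $J$ along, and on keeping $J$ positive so that the characteristic map remains invertible; both are secured on a short time interval but are precisely what degenerates at the blow-up time, consistently with the unbounded negative slope growth exploited in Theorem \ref{theorem-nonlinear}.
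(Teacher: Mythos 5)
Your proposal is correct in substance and follows the same overall route as the paper---method of characteristics, a closed evolution system on $[0,2\pi]$ for the unknowns along characteristics, invertibility of $s \mapsto X(t,s)$ via positivity of $\partial X/\partial s$, and continuation to a maximal time---but the implementation differs in a worthwhile way. The paper never rewrites $Q[v]$ and $P[v]$ as integrals in the characteristic variable: it keeps them as $Q[v](X)$ and $P[v](X)$, invokes Lemma \ref{lem-nonlinear-1} (so $Q[v]$ is Lipschitz) and the identity (\ref{zero-mean-Q}) from Lemma \ref{lem-nonlinear-0} (so $P[v] \in C^1_{\rm per}(\mathbb{T})$), and then asserts that the vector field for $(X,V,W,U)$ is $C^1$ and applies ODE existence--uniqueness theory, leaving the Banach-space nature of the nonlocal coupling implicit; the Jacobian enters only a posteriori through (\ref{invertibility}). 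You instead make the nonlocality explicit by adjoining $J = \partial X/\partial s$ as a fifth unknown and changing variables $y = X(t,\sigma)$, $dy = J(t,\sigma)\,d\sigma$, so the system becomes genuinely self-contained, and you prove existence by contraction in $C([0,\tau];C([0,2\pi];\mathbb{R}^5))$. Your version is closer to the treatment in \cite{CP} and is in effect a rigorous filling-in of the step the paper compresses into one sentence; what the paper's formulation buys is brevity, at the cost of asking the reader to interpret ``the local well-posedness theory for differential equations'' as exactly the fixed-point argument you spell out.

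Two points in your write-up need tightening, though neither is fatal. First, local Lipschitz continuity of the nonlocal integrals in $X$ (sup norm) does not follow from boundedness of the kernels alone: $\varphi'$ jumps by $-2$ across $0 \pmod{2\pi}$, so comparing $\varphi'\bigl(X_1(s)-X_1(\sigma)\bigr)$ with $\varphi'\bigl(X_2(s)-X_2(\sigma)\bigr)$ requires that both arguments change sign on the same set $\{\sigma = s\}$; this holds when the iterates $X(t,\cdot)$ are strictly increasing with $X(t,0)=0$ and $X(t,2\pi)=2\pi$, so monotonicity (e.g.\ a constraint such as $J \geq 1/2$, preserved by the integral map for small $\tau$) should be built into the invariant ball rather than deferred to the reconstruction step. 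Second, setting $v := V \circ X^{-1}$ does not by itself give $v_x = U \circ X^{-1}$ and $w = W \circ X^{-1}$: since your contraction space carries only continuity in $s$, you must verify the consistency relations $\partial_s V = U J$ and $\partial_s W = V J$ (true at $t=0$ and propagated by a Gronwall argument, using $C^1$ dependence on $s$ of the data), and only then does the fixed point solve (\ref{v-nonlinear}) with $Q[v]$ and $P[v]$ evaluated on the actual $v$; the paper glosses the same point with its assertion that $C^1(0,2\pi)$ initial data yield $C^1(0,2\pi)$ solutions. Relatedly, you cannot literally cite Lemma \ref{lem-nonlinear-0} to freeze $\bar v = \bar v_0$ before a solution exists; do as the paper effectively does---insert the constant $\bar v_0$ into the system and recover $\bar v(t) = \bar v_0$ a posteriori from the propagated boundary condition $W(t,2\pi) = 2\pi \bar v_0$.
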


\begin{proof}
If $v \in C^1_0$, then $Q[v] \in C^0_{\rm per}(\mathbb{T}) \cap {\rm Lip}(\mathbb{T})$ by Lemma \ref{lem-nonlinear-1}. It follows from Lemma 
\ref{lem-nonlinear-0} due to (\ref{zero-mean-Q}) that if $v \in C^1_0$, 
then $P[v] \in C^1_{\rm per}(\mathbb{T})$. Therefore, the nonlocal parts of the initial-value problems (\ref{char-V}), (\ref{char-W}), and (\ref{char-U}) are well-defined 
and can be considered for $X \in [0,2\pi]$ that corresponds to $s \in [0,2\pi]$.

For $s \in [0,2\pi]$, we rewrite the evolution equations in (\ref{char-X}), (\ref{char-V}), (\ref{char-W}), and (\ref{char-U}) as the dynamical system 
\begin{equation*}
\frac{d}{dt} \begin{bmatrix} X \\ V \\ W  \\ U \end{bmatrix} = 
\begin{bmatrix} \varphi(X) - \varphi(0) + V - V|_{s=0} \\ 
\varphi(X) W - \pi m^2 \bar{v} \sinh(X) - Q[v](X) \\ 
\varphi'(X) W - \pi m^2 \bar{v} [ \cosh(X) - 1 ] 
+ \frac{1}{2} [ V^2 - (V|_{s=0})^2 ] - P[v](X) + P[v] |_{s=0} \\
 \varphi'(X) [W - U] + \varphi(X) V - \pi m^2 \bar{v} \cosh(X)
 - \frac{1}{2} U^2 + V^2 - P[v](X) \end{bmatrix}
\end{equation*}	
subject to the initial condition 
\begin{equation*}
\begin{bmatrix} X \\ V \\ W \\ U\end{bmatrix} \biggr|_{t = 0} = 
\begin{bmatrix} s \\ v_0(s) \\ w_0(s) \\ v_0'(s) \end{bmatrix}
\end{equation*}
and the boundary conditions 
\begin{equation*}
\left\{ \begin{array}{ll} 
X(t,0) = 0, & \quad X(t,2\pi) = 2\pi, \\
V(t,0) = V|_{s=0}, & \quad V(t,2\pi) = V|_{s=0}, \\
W(t,0) = 0, & \quad  W(t,2\pi) = 2 \pi \bar{v}, \end{array} \right.
\end{equation*}
where $V|_{s=0}$ satisfies 
$$
\frac{d}{dt} V \biggr|_{s = 0} = -Q[v](0).
$$

Since the vector field of the dynamical system is 
$C^1$ in $(X,V,W,U)$ on $[0,2\pi] \times \mathbb{R}\times \mathbb{R} \times \mathbb{R}$, there exists a unique local solution 
$X(\cdot,s), V(\cdot,s), W(\cdot,s), U(\cdot,s) \in C^1([0,T))$ to the initial-value problem for some maximal existence time $T > 0$. The solution depends continuously 
on the initial data for every $s \in [0,2\pi]$. Moreover, 
since the initial data is $C^1(0,2\pi)$, then 
$X(t,\cdot), V(t,\cdot), W(t,\cdot), U(t,\cdot) \in C^1(0,2\pi)$ for every $t \in [0,T)$. 
The invertibility of the transformation $[0,2\pi] \ni s \mapsto X(t,s) \in [0,2\pi]$ is guaranteed along the solution by 
\begin{equation}
\label{invertibility}
\frac{\partial X}{\partial s}(t,s) = \exp\left(\int_0^t \left[ \varphi'(X(t',s)) + U(t,s) \right] dt' \right) > 0.
\end{equation}
which follows from (\ref{char-X-der-solution}). Since $0$ and $2 \pi$ are 
equilibrium points of (\ref{char-X}), we have $X(t,0) = 0$ and $X(t,2\pi) = 2\pi$. 
Boundary conditions are preserved along the solution due to consistence 
of (\ref{char-W}) and (\ref{char-U}) with the main equation (\ref{char-V}). 
Due to the boundary conditions, the solution $V(t,\cdot) \in C^1(0,2\pi)$ 
is extended to $V(t,\cdot) \in C^1_0$ on $\mathbb{T}$. 
Due to invertibility of the transformation $[0,2\pi] \ni s \mapsto X(t,s) \in [0,2\pi]$, 
we have $v(t,\cdot) \in C^1_0$ for $t \in [0,T)$ and moreover, $v \in C^1([0,T), C^1_0)$.
\end{proof}

The proof of Theorem \ref{theorem-nonlinear} relies on the study of evolution of 
$v_x \in C^1([0,T),C^0(\mathbb{T}\backslash \{0\}))$ at the right side of the peak. 
By Lemma \ref{lem-nonlinear-3}, we are allowed to define the one-sided limits
$U^{\pm}(t) := \lim_{s \to 0^{\pm}} U(t,s)$ for $t \in [0,T)$, 
where $U(t,0^-) = U(t,2\pi^-)$. The functions $U^{\pm} \in C^1(0,T)$ 
satisfy for $t \in (0,T)$:
\begin{equation}
\label{U-0}
\frac{d U^{\pm}}{dt} = \pm U^{\pm} + M V|_{s=0} 
- \pi m^2 \bar{v} - \frac{1}{2} (U^{\pm})^2 + (V|_{s=0})^2 - P[v](0),
\end{equation}
which follow from taking the limits $s \to 0^{\pm}$ in (\ref{char-U}). 
We will now prove Theorem \ref{theorem-nonlinear}.

Due to the decomposition (\ref{decomp}), we can rewrite the initial bound (\ref{initial-bound-theorem}) in the form:
\begin{equation}
\label{initial-bound}
\| v_0 \|_{H^1(\mathbb{T})} + \| v_0' \|_{L^{\infty}(\mathbb{T})} < \delta,
\end{equation}
where $v_0 \in C^1_0 \subset H^1_{\rm per}(\mathbb{T}) \cap W^{1,\infty}(\mathbb{T})$
and $\delta > 0$ is arbitrary small parameter. We first show that there exists $t_0 \in (0,T)$ and $v_0 \in C^1_0$ such that the unique local solution $v \in C^1([0,T),C^1_0)$ to the initial-value problem (\ref{v-nonlinear})  constructed by 
Lemma \ref{lem-nonlinear-3}  satisfies
\begin{equation}
\label{final-bound}
\| v_x(t_0,\cdot) \|_{L^{\infty}(\mathbb{T})} \geq 1.
\end{equation}

It follows from $2 P[v](0) = \int_{\mathbb{T}} \varphi(-y)q[v](y) dy > 0$
that equation (\ref{U-0}) for the upper sign can be estimated by 
\begin{equation}
\label{diff-ineq}
\frac{d U^+}{dt} \leq U^+ + M V|_{s=0} 
- \pi m^2 \bar{v} + (V|_{s=0})^2,
\end{equation}
By Theorem \ref{theorem-CS}, for every small $\varepsilon > 0$,
there exists $\nu(\varepsilon) > 0$ such that 
if $\| v_0 \|_{H^1(\mathbb{T})} < \nu(\varepsilon)$, then
$\| v(t,\cdot) \|_{H^1(\mathbb{T})} < \varepsilon$ for every $t \in [0,T)$.
By Sobolev's embedding, there is a positive constant $C$ such that 
\begin{equation}
\label{bound-on-V0}
\| v(t,\cdot) \|_{L^{\infty}(\mathbb{T})} \leq C \| v(t,\cdot) \|_{H^1} < C \varepsilon.
\end{equation}
By using (\ref{bound-on-V0}), we can simplify (\ref{diff-ineq}) 
for every $\varepsilon \in (0,1]$ to the form:
\begin{equation}
\label{diff-ineq-2}
\frac{d U^+}{dt} \leq U^+ + M C \varepsilon 
+ \pi m^2 C \varepsilon + C^2 \varepsilon^2 \leq U^+ + C_1 \varepsilon,
\end{equation}
for some $\varepsilon$-independent constant $C_1 > 0$. 
Let us assume that the initial data $v_0 \in C^1_0$ satisfies
\begin{equation}
\label{eq-alpha}
\lim_{x \to 0^+} v_0'(x) = -\| v_0' \|_{L^{\infty}(\mathbb{T})} = -2 C_1 \varepsilon,
\end{equation}
The initial bound (\ref{initial-bound}) is consistent with 
(\ref{eq-alpha}) if for every small $\delta > 0$, the small value 
of $\varepsilon$ satisfies
$$
\nu(\varepsilon) + 2 C_1 \varepsilon < \delta,
$$
which just specifies small $\varepsilon$ in terms of small $\delta$. By integrating 
(\ref{diff-ineq-2}) and using (\ref{eq-alpha}), we obtain
\begin{equation*}
U^+(t) \leq e^{t} \left[  U^+(0) + C_1 \varepsilon \right] = - C_1 \varepsilon e^{t}.
\end{equation*}
Hence, for every small $\varepsilon > 0$ there exists a sufficiently large
$t_1 = -\log(C_1 \varepsilon)$ such that $U^+(t_1) \leq -1$.
If $t_1 < T$, the bound (\ref{final-bound}) is true with some $t_0 \in (0,t_1]$ since 
\begin{equation}
\label{lower-bound}
\| v_x(t,\cdot) \|_{L^{\infty}(\mathbb{T})} = 
\| U(t,\cdot) \|_{L^{\infty}(0,2\pi)} \geq |U^+(t)|, \quad t \in [0,T).
\end{equation}
If $t_1 \geq T$, then $T$ is the finite maximal existence time in Lemma \ref{lem-nonlinear-3} for the solution 
$$
X(\cdot,s), V(\cdot,s), W(\cdot,s), U(\cdot,s) \in C^1([0,T)), \quad 
s \in [0,2\pi].
$$ 
By (\ref{invertibility}), we have $X \in C^1([0,T),C^1(0,2\pi))$ 
if and only if $U \in C^1([0,T),C^0(0,2\pi))$. By the bound 
(\ref{bound-on-V0}), we have $W,V \in C^1([0,T),C^0(0,2\pi))$ 
with bounded limits of $\| W(t,\cdot) \|_{L^{\infty}(0,2\pi)}$ 
and $\| V(t,\cdot) \|_{L^{\infty}(0,2\pi)}$ as $t \to T^-$. 
Then, necessarily, if $T < \infty$, we have 
\begin{equation}
\label{blow-up-ode}
\| U(t,\cdot) \|_{L^{\infty}(0,2\pi)} \to \infty \quad 
\mbox{\rm as} \quad t \to T^-,
\end{equation}
so that there exists $t_0 \in (0,T)$ such that  $\| U(t_0,\cdot) \|_{L^{\infty}(0,2\pi)} \geq 1$ and the bound (\ref{final-bound}) is true 
due to (\ref{lower-bound}).

Finally, we show that there exists $v_0 \in C^1_0$ such that the maximal existence time $T$ is finite. 
Due to the bound (\ref{bound-on-V0}), we have for every $\varepsilon \in (0,1)$:
$$
\left| M V|_{s=0} 
- \pi m^2 \bar{v} + (V|_{s=0})^2 - P[v](0) \right|
\leq M C \varepsilon + \pi m^2 C \varepsilon + C^2 \varepsilon^2 
+ \frac{1}{2} M \varepsilon^2 \leq C_2 \varepsilon,
$$
for some $\varepsilon$-independent constant $C_2 > 0$. 
Let $U_0(\varepsilon)$ be the negative root of the quadratic equation 
$$
U - \frac{1}{2} U^2 + C_2 \varepsilon = 0.
$$
It is clear that $U_0(\varepsilon) \in (-C_2 \varepsilon,0)$. Assume that the initial data $v_0 \in C^1_0$ satisfies
\begin{equation}
\label{eq-alpha-new}
\lim_{x \to 0^+} v_0'(x) = -\| v_0' \|_{L^{\infty}(\mathbb{T})} = 2 U_0(\varepsilon).
\end{equation}
The initial bound (\ref{initial-bound}) is consistent with 
(\ref{eq-alpha-new}) if for every small $\delta > 0$, the small value 
of $\varepsilon$ satisfies
$$
\nu(\varepsilon) + 2 |U_0(\varepsilon)| < \delta,
$$
which again specifies small $\varepsilon$ in terms of small $\delta$. If $U^+(0) = 2 U_0(\varepsilon)$, then differential equation (\ref{U-0}) for $U^+$ implies that $\frac{d}{dt} U^+ < 0$, hence $U^+(t) < U^+(0)$ for small positive $t$ and the map $t \mapsto U(t)$ is monotonically decreasing. Let $\overline{U}$ be the supersolution which satisfies:
\begin{equation*}
\label{Ricatti-inequality}
\frac{d \overline{U}}{dt} = \overline{U} - \frac{1}{2} \overline{U}^2 + C_2 \varepsilon.
\end{equation*}
with $\overline{U}(0) = U^+(0) = 2 U_0(\varepsilon)$. It follows by the comparison theory 
for differential equations that $U^+(t) \leq \overline{U}(t)$ for every $t > 0$ for which $U^+(t)$ exists. 
Since there exists a finite $\overline{T} > 0$ such that 
$\overline{U}(t) \to -\infty$ as $t \to \overline{T}^-$, 
then there exists $T \in (0,\overline{T}]$ such that $U^+(t) \to -\infty$ as $t 
\to T^-$, which implies that the blow-up criterion (\ref{blow-up-ode}) is satisfied for finite $T > 0$. 

Both parts of Theorem \ref{theorem-nonlinear} have been proven.

\appendix

\section{Derivation of the evolution equations for $w$ and $v_x$}

We consider the evolution equation for $v$ given by 
\begin{equation}
\label{v-appendix}
v_t = (c - \varphi) v_x + \varphi w - \pi m^2 \bar{v} \sinh(x) 
 + (v|_{x=0} - v) v_x - Q[v],
\end{equation}
where 
$$
Q[v](x) := \frac{1}{2} \int_{\mathbb{T}} \varphi'(x-y) \left[ 
v^2 +  \frac{1}{2}(v_y)^2 \right](y) dy.
$$
Substituting $v = w_x$ into (\ref{v-appendix}) yields
$$
w_{tx} = (c - \varphi) w_{xx}
+ \varphi w - \pi m^2 \bar{v} \sinh(x) 
 +  (v|_{x=0} - w_x) w_{xx} - Q[v].
$$
By integrating this evolution equation in $x$ 
and picking the constant of integration from the boundary conditions $w(t,0) = 0$, we obtain
\begin{equation}
\label{w-appendix}
w_t = (c - \varphi) w_x + \varphi' w - \pi m^2 \bar{v} [\cosh(x) - 1] 
- \frac{1}{2} (v|_{x=0} - v)^2 - P[v] + P[v] |_{x=0},
\end{equation}
where
$$
P[v](x) := \frac{1}{2} \int_{\mathbb{T}} \varphi(x-y) \left[ 
v^2 +  \frac{1}{2}(v_y)^2 \right](y) dy.
$$

Differentiating (\ref{v-appendix}) in $x$ yields
\begin{equation}
\label{u-appendix}
v_{tx} = (c - \varphi) v_{xx} 
+ \varphi' (w - v_x) + \varphi v - \pi m^2 \bar{v} \cosh(x) 
+ (v|_{x=0} - v) v_{xx} + v^2 - \frac{1}{2} (v_x)^2 
- P[v],
\end{equation}
where we have used the relation 
$$
\frac{1}{2} \int_{\mathbb{T}} \varphi''(x-y) \left[ 
v^2 +  \frac{1}{2}(v_y)^2 \right] dy = 
\frac{1}{2} \int_{\mathbb{T}} \varphi(x-y) \left[ 
v^2 + \frac{1}{2} (v_y)^2 \right] dy - v^2 - \frac{1}{2} v_x^2
$$

\section{Explicit solutions to the linearized equations}

Separation of variables in (\ref{char}) gives
$$
m t = \int_s^X \frac{dx}{\cosh(\pi - x) - \cosh(\pi)} = m \log \frac{(e^{2\pi} - e^X) (1 - e^s)}{(1 - e^X) (e^{2\pi} - e^s)},
$$
from which we derive
\begin{equation}
\label{char-sol}
X(t,s) = \log \frac{(e^{2\pi} - e^s) + e^{2\pi - t} (e^s - 1)}{(e^{2\pi} - e^s) + e^{-t} (e^s - 1)}, \quad t \in \mathbb{R}^+, \quad s \in [0,2\pi],
\end{equation}
such that
$$
\lim_{s \to 0^+} X(t,s) = 0 \quad \mbox{\rm and} \quad \lim_{s \to 2 \pi^-} X(t,s) = 2\pi.
$$
After lengthy manipulations,
we can find that
\begin{equation*}
\frac{\partial X}{\partial s}(t,s) = \frac{(e^{2\pi}-1)^2 e^{s-t}}{
[(e^{2\pi}-e^s) + e^{-t} (e^s - 1)][(e^{2\pi}-e^s) + e^{2\pi - t} (e^s - 1)]}, \quad t \in \mathbb{R}^+, \quad s \in [0,2\pi],
\end{equation*}
such that
$$
\lim_{s \to 0^+} \frac{\partial X}{\partial s}(t,s) = e^{-t} \quad \mbox{\rm and} \quad
\lim_{s \to 2 \pi^-} \frac{\partial X}{\partial s}(t,s) = e^t.
$$

Integrating (\ref{w-eq}) with an integrating factor yields
$$
W(t,s) = \frac{\partial X}{\partial s} \left[ w_0(s) + \pi m^2 \bar{v}
\int_0^t \frac{1 - \cosh X(t',s)}{\frac{\partial X}{\partial s}(t',s)} dt' \right].
$$
By using (\ref{char-sol}), we derive the following simple expression:
\begin{equation}
\label{w-sol}
W(t,s) = \frac{\partial X}{\partial s} \left[ w_0(s) - \pi m^2 \bar{v}
 (\cosh s - 1) (1 - e^{-t}) \right], \quad t \in \mathbb{R}^+, \quad s \in [0,2\pi],
\end{equation}
such that
$$
\lim_{s \to 0^+} W(t,s) = 0 \quad \mbox{\rm and} \quad
\lim_{s \to 2 \pi^-} W(t,s) = w_0(2 \pi) = 2 \pi \bar{v}.
$$

Instead of integrating (\ref{v-eq}), we can use the chain rule
$$
\frac{\partial W}{\partial s}(t,s) = V(t,s) \frac{\partial X}{\partial s}(t,s),
$$
from which we derive
\begin{equation}
\label{v-sol}
V(t,s) = v_0(s) - \pi m^2 \bar{v} \sinh s (1 - e^{-t}) +
\left[ w_0(s) - \pi m^2 \bar{v} (\cosh s - 1) (1 - e^{-t})\right] Y(t,s)
\end{equation}
with
\begin{eqnarray*}
Y(t,s) & := & \frac{\partial}{\partial s} \log \frac{\partial X}{\partial s}(t,s) \\
& = & \frac{(1-e^{-t}) [\sinh(2\pi - s) + e^{-t} \sinh(s)]}{\cosh(2 \pi - s)
-1 + [\cosh(2\pi) + 1 - \cosh(2\pi - s) - \cosh(s)] e^{-t} + (\cosh s - 1) e^{-2t}},
\end{eqnarray*}
such that
$$
\lim_{s \to 0^+} V(t,s) = v_0(0) \quad \mbox{\rm and} \quad
\lim_{s \to 2 \pi^-} V(t,s) = v_0(2 \pi) = v_0(0).
$$

\section{Smooth, peaked, and cusped periodic waves}

Traveling wave solutions to the CH equation (\ref{CH}) are of the form 
$u(t,x) = \phi(x-ct)$, where $c$ is the wave speed and $\phi(x) : \mathbb{T}\mapsto \mathbb{R}$ satisfies the third-order differential equation 
\begin{equation}
\label{third-order}
(c-\phi) (\phi''' - \phi') + 2 \phi \phi' - 2 \phi' \phi'' = 0.
\end{equation}
Multiplying (\ref{third-order}) by $(c - \phi)$ allows us to integrate it once:
\begin{equation}
\label{second-order}
\frac{d}{dx} \left[ (c - \phi)^2 (\phi'' - \phi) \right] = 0 \quad \Rightarrow \quad (c - \phi)^2 (\phi'' - \phi) = a,
\end{equation}
where $a \in \mathbb{R}$ is the integration constant. The level $\phi = c$ is singular and in what follows, we consider $c > 0$. For every $\phi \neq c$, 
the second-order differential equation (\ref{second-order}) can be integrated once:
\begin{equation}
\label{first-order}
\phi'' - \phi - \frac{a}{(c - \phi)^2} = 0 \quad \Rightarrow \quad 
(\phi')^2 - \phi^2 - \frac{2a}{c-\phi} = b,
\end{equation}
where $b \in \mathbb{R}$ is another integration constant. 

Let $W(\phi) := -\phi^2 - 2a/(c-\phi)$. If $a = 0$, there is only one critical point of $W$ at $\phi = 0$ so that $(0,0)$ is a saddle point of the dynamical system 
on the phase plane $(\phi,\phi')$. Peaked periodic waves are constructed from orbits of the linear equation $\phi'' - \phi = 0$ intersecting the singularity line $\phi = c$ over the length of $2 \pi$. If 
$$
M_{\phi} := \max_{x \in \mathbb{T}} \phi(x), \quad 
m_{\phi} := \min_{x \in \mathbb{T}} \phi(x),
$$
then $b = -m_{\phi}^2$ and $c = M_{\phi}$. The exact $2\pi$-periodic solution to $\phi'' - \phi = 0$ is given by 
\begin{equation}
\label{wave-appendix}
\phi(x) = m_{\phi} \cosh(\pi - |x|), \quad M_{\phi} = m_{\phi} \cosh(\pi).
\end{equation}
The uniquely selected peaked wave (\ref{unique-wave}) corresponds to $\phi = \varphi$ with $M_{\phi} = M$ and $m_{\phi} = m$ given by (\ref{maximum}) and (\ref{minimum}). The one-parameter 
family (\ref{wave-appendix}) corresponds to the one-parameter family of peaked $2\pi$-periodic waves (\ref{peaked-traveling-wave}), this family is given by 
$\phi = \gamma \varphi$ with $\gamma := \frac{m_{\phi}}{m}$ being a free parameter and $c = M_{\phi} = \gamma M$ in agreeement between (\ref{peaked-traveling-wave}) and (\ref{wave-appendix}).

If $a > 0$, there is only one critical point of $W$ at some $\phi_0 < 0$ 
so that $(\phi_0,0)$ is a saddle point on the phase plane $(\phi,\phi')$. 
The periodic waves are constructed from orbits squeezed between the stable and unstable curves from $(\phi_0,0)$ and the singularity line $\phi = c$. These orbits give the cusped periodic waves with the following local behavior near the singularity:
$$
\phi(x) \sim c - \alpha x^{2/3} \quad \mbox{\rm as} \quad x \to 0,
$$ 
where $\alpha > 0$ is related to parameter $a > 0$. 

If $a < 0$, there are three critical points of $W$ given by $\{ \phi_1,\phi_2,\phi_3\}$ ordered as $\phi_1 < \phi_2 < c < \phi_3$ so that 
$(\phi_1,0)$ and $(\phi_3,0)$ are saddle points, whereas 
$(\phi_2,0)$ is a center point.
Smooth periodic waves are located inside the homoclinic loop connected to $(\phi_1,0)$ which surrounds $(\phi_2,0)$. 
Cusped waves also exist for $\phi > c$ but not for $\phi < c$.


\begin{thebibliography}{99}



\bibitem{Anco} S.C. Anco, P. da Silva, and I. Freire, 
``A family of wave-breaking equations generalizing the Camassa--Holm and Novikov equations", J. Math. Phys. {\bf 56} (2015) 091506 (21 pages).

\bibitem{Anco2} S.C. Anco and E. Recio,
``A general family of multi-peakon equations and their properties", 
 J. Phys. A: Math. Theor. {\bf 52} (2019), 125203 (pages).

\bibitem{BC1} A. Bressan and A. Constantin, ``Global conservative solutions of the Camassa-Holm equation", Arch. Rational Mech. Anal. {\bf 183} (2007), 215--239.


\bibitem{B} P. Byers, ``Existence time for the Camassa-Holm equation and the critical Sobolev index", Indiana Univ. Math. J. {\bf 55} (2006), 941--954.

\bibitem{CH} R. Camassa and D.D. Holm, ``An integrable shallow water equation with peaked solitons",
Phys. Rev. Lett. {\bf 71} (1993), 1661--1664.

\bibitem{CHH} R. Camassa, D. Holm, and J. Hyman, ``A new integrable shallow water equation", Adv. Appl. Math. {\bf 31} (1994), 1--33.

\bibitem{CP} R.M. Chen and D.E. Pelinovsky, ``$W^{1,\infty}$ instability of $H^1$-stable peakons in the Novikov equation",
arXiv:1911.08440 (2020).





\bibitem{CE} A. Constantin and J. Escher, ``Well-posedness, global existence, and blowup phenomena for a periodic quasi-linear hyperbolic equation", Comm. Pure Appl. Math. {\bf 51} (1998), 475--504.

\bibitem{CE2000} A. Constantin and J. Escher, ``On the blow-up rate and the blow-up set of breaking waves for a shallow water equation", Math. Z. {\bf 233} (2000), 75--91.



\bibitem{CL} A. Constantin and D. Lannes, ``The hydrodynamical relevance of the Camassa--Holm and Degasperis--Procesi equations", Arch. Ration. Mech. Anal. {\bf 192} (2009), 165--186.

\bibitem{CM} A. Constantin and L. Molinet, ``Orbital stability of solitary waves for a shallow water
equation", Physica D {\bf 157} (2001), 75--89.

\bibitem{CS} A. Constantin and W.A. Strauss, ``Stability of peakons", Comm. Pure Appl. Math. {\bf 53} (2000), 603--610.

\bibitem{Dai} H.H. Dai, ``Model equations for nonlinear dispersive waves in a compressible Mooney-Rivlin rod", Acta Mech. {\bf 127} (1998), 193--207.

\bibitem{D} R. Danchin, ``A few remarks on the Camassa--Holm equation", Diﬀ. Int. Eqs. {\bf 14} (2001),  953--988.

\bibitem{DKT} C. De Lellis, T. Kappeler, and P. Topalov, ``Low-regularity solutions of the periodic
Camassa--Holm equation", Comm. PDEs {\bf 32} (2007), 87--126.

\bibitem{Kostenko} J. Eckhardt and A. Kostenko, ``An isospectral problem for global conservative
multi-peakon solutions of the Camassa--Holm equation", Commun. Math. Phys. {\bf 329} (2014), 893--918.

\bibitem{Johnson} R.S. Johnson, ``Camassa--Holm, Korteweg--de Vries and related models for water waves", J. Fluid Mech. {\bf 455} (2002), 63--82.

\bibitem{GP1} A. Geyer and D.E. Pelinovsky, ``Spectral stability of periodic waves in the generalized
reduced Ostrovsky equation",  Lett. Math. Phys. {\bf 107} (2017), 1293--1314.

\bibitem{GP2} A. Geyer and D.E. Pelinovsky, ``Linear instability and uniqueness of the peaked periodic wave
in the reduced Ostrovsky equation",  SIAM J. Math. Anal. {\bf 51} (2019), 1188--1208.

\bibitem{GP3} A. Geyer and D.E. Pelinovsky, ``Spectral instability of the peaked periodic wave in the reduced Ostrovsky equation",
Proceedings of AMS (2020).

\bibitem{GV} A. Geyer and J. Villadelprat, ``On the wave length of smooth periodic traveling waves
of the Camassa--Holm equation", J. Diff. Eqs. {\bf 259} (2015), 2317--2332.

\bibitem{Molinet} Z. Guo, X. Liu, L. Molinet, and Z. Yin, ``Norm inflation of
the Camassa-Holm and relation equations in the critical space", J. Diff. Eqs. {\bf 266} (2019), 1698--1707.

\bibitem{HGH} A. Himonas, K. Grayshan, and C. Holliman,``Ill-posedness for the $b$-family of equations",
J. Nonlin. Sci. {\bf 26} (2016), 1175--1190.



\bibitem{HM2002} A. Himonas and G. Misiolek, ``The Cauchy problem for an integrable shallow water equation", Diff. Int. Eqs. {\bf 14} (2001), 821--831.

\bibitem{Him} A. Himonas, G. Misiolek, and G. Ponce, ``Non-uniform continuity in $H^1$ of the solution map
of the CH equation", Asian J. Math. {\bf 11} (2007), 141--150.


\bibitem{H1} H. Holden and X. Raynaud, ``Global conservative multipeakon solutions of the
Camassa--Holm equation", J. Hyperbolic Differ. Equ. {\bf 4}  (2007), 39--64.

\bibitem{Kouranbaeva} S. Kouranbaeva, “The Camassa--Holm equation as geodesic flow on the diffeomorphism group”, 
Journal of Mathematical Physics {\bf 40} (1999) 857--868.

\bibitem{Len1} J. Lenells, ``Stability of periodic peakons", Int. Math. Res. Not. {\bf 2004} (2004),
485--499.

\bibitem{Len2} J. Lenells, ``A variational approach to the stability of periodic peakons",
J. Nonlinear Math. Phys. {\bf 11} (2004) 151--163.

\bibitem{Len3} J. Lenells, ``Traveling wave solutions of the Camassa--Holm
equation", J. Diff. Eqs. {\bf 217} (2005) 393--430.

\bibitem{Li} L.-C. Li, ``Long time behaviour for a class of low-regularity solutions of the Camassa--Holm equation",
Commun. Math. Phys. {\bf 285} (2009), 265--291.




\bibitem{Lund} H. Lundmark and B. Shuaib, ``Ghostpeakons and characteristic curves for the Camassa--Holm, Degasperis--Procesi and Novikov equations", SIGMA {\bf 15} (2019), 017 (51 pages).

\bibitem{Mis1} G. Misiolek, ``Shallow water equation as a geodesic flow on the Bott--Virasoro group", J. Geom. Phys. {\bf 24} (1998) 203--208.

\bibitem{Mis2} G. Misiolek, ``Classical solutions of the periodic Camassa--Holm equation", Geom. Funct. Anal. {\bf 12} (2002) 1080--1104.

\bibitem{Molinet1} L. Molinet, ``A Liouville property with application to
asymptotic stability for the Camassa--Holm equation", Arch. Rational Mech. Anal. {\bf 230} (2018) 185--230.

\bibitem{Molinet2} L. Molinet, ``Asymptotic stability for some non-positive perturbations of the Camassa--Holm peakon
with application to the antipeakon--peakon profile", IMRN (2019), rny224, in press.

\bibitem{NP} F. Natali and D.E. Pelinovsky, ``Instability of $H^1$-stable peakons in the Camassa--Holm equation",
J. Diff. Eqs. {\bf 268} (2020), 7342--7363.




\end{thebibliography}
\end{document}